\newtheorem{thm}{Theorem}[section]
\newtheorem{lem}[thm]{Lemma}
\theoremstyle{definition}
\newcommand{\scr}[1]{\mathscr #1}
\definecolor{wco}{rgb}{0.5,0.2,0.3}
\numberwithin{equation}{section} \theoremstyle{remark}
\renewcommand{\bar}{\overline}
\newcommand{\ua}{\uparrow}
\renewcommand{\tilde}{\widetilde}
\title{{\bf Hypercontractivity  for   Functional Stochastic Partial Differential Equations}\footnote{Supported in
 part by  Lab. Math. Com. Sys., the 985 project and  NNSFC(11131003, 11431014, 11401592).}
}
\author{
{\bf  Jianhai Bao$^{b)}$,  Feng-Yu Wang$^{a),c)}$,  Chenggui Yuan$^{c)}$}\\
\footnotesize{$^{a)}$School of Mathematical Sciences, Beijing Normal
University, Beijing 100875, China}\\
\footnotesize{$^{b)}$School of Mathematics and Statistics, Central
South
University, Changsha 410083, China}\\
 \footnotesize{$^{c)}$Department of Mathematics,
Swansea University, Singleton Park, SA2 8PP, UK}\\
\footnotesize{jianhaibao13@gmail.com, wangfy@bnu.edu.cn,
F.-Y.Wang@swansea.ac.uk, C.Yuan@swansea.ac.uk}}
\begin{document}
\def\R{\mathbb R}  \def\ff{\frac} \def\ss{\sqrt} \def\B{\mathbf
B}
\def\N{\mathbb N} \def\kk{\kappa} \def\m{{\bf m}}
\def\dd{\delta} \def\DD{\Dd} \def\vv{\varepsilon} \def\rr{\rho}
\def\<{\langle} \def\>{\rangle} \def\GG{\Gamma} \def\gg{\gamma}
  \def\nn{\nabla} \def\pp{\partial} \def\EE{\scr E}
\def\d{\text{\rm{d}}} \def\bb{\beta} \def\aa{\alpha} \def\D{\scr D}
  \def\si{\sigma} \def\ess{\text{\rm{ess}}}
\def\beg{\begin} \def\beq{\begin{equation}}  \def\F{\scr F}
\def\Ric{\text{\rm{Ric}}} \def\Hess{\text{\rm{Hess}}}
\def\e{\text{\rm{e}}} \def\ua{\underline a} \def\OO{\Omega}  \def\oo{\omega}
 \def\tt{\tilde} \def\Ric{\text{\rm{Ric}}}
\def\cut{\text{\rm{cut}}} \def\P{\mathbb P} \def\ifn{I_n(f^{\bigotimes n})}
\def\C{\scr C}      \def\aaa{\mathbf{r}}     \def\r{r}
\def\gap{\text{\rm{gap}}} \def\prr{\pi_{{\bf m},\varrho}}  \def\r{\mathbf r}
\def\Z{\mathbb Z} \def\vrr{\varrho} \def\l{\lambda}
\def\L{\scr L}\def\Tt{\tt} \def\TT{\tt}\def\II{\mathbb I}
\def\i{{\rm in}}\def\Sect{{\rm Sect}}\def\E{\mathbb E} \def\H{\mathbb H}
\def\M{\scr M}\def\Q{\mathbb Q} \def\texto{\text{o}} \def\LL{\Lambda}
\def\Rank{{\rm Rank}} \def\B{\scr B} \def\i{{\rm i}} \def\HR{\hat{\R}^d}
\def\to{\rightarrow}\def\l{\ell}\def\ll{\lambda}
\def\8{\infty}\def\ee{\epsilon}\def\V{\mathbb{V}}
\def\DD{\Delta} \def\Y{\mathbb{Y}} \def\lf{\lfloor}
\def\rf{\rfloor}\def\3{\triangle}\def\H{\mathbb{H}}\def\S{\mathbb{S}}\def\1{\lesssim}
\def\va{\varphi}

\maketitle

\begin{abstract}
Explicitly sufficient conditions on the hypercontractivity are
presented for two classes of functional stochastic partial
differential equations driven by, respectively,  non-degenerate and
degenerate Gaussian noises. Consequently, these conditions   imply
that the associated Markov semigroup is $L^2$-compact and
exponentially convergent to the stationary distribution  in entropy,
variance and total variational norm.
 As the log-Sobolev inequality is invalid under the present framework, we apply a  criterion  presented  in the recent paper \cite{Wang14} using
  Harnack inequality, coupling property and  Gaussian concentration property of  the stationary distribution.   To verify the concentration property,
  we prove a Fernique type inequality for infinite-dimensional Gaussian processes  which might be interesting by itself. \end{abstract}

\noindent
 {\bf AMS subject Classification:}\    60H15, 60J60    \\
\noindent {\bf Keywords:} Hypercontractivity,
  functional stochastic  partial differential equation,
 Harnack inequality, coupling.

\section{Introduction}

The hypercontractivity was  introduced in 1973 by Nelson \cite{N}
for the Ornstein-Ulenbeck semigroup. As applications,  it implies
the exponential convergence of the Markov semigroup in entropy (and
hence, also in variance) to the associated  stationary distribution,
and it also implies the $L^2$-compactness of the semigroup subject
to the existence of a density with respect to the stationary
distribution, see \cite{Wang14} for more details. In the setting of
symmetric Markov processes, Gross  \cite{Gross}  proved that
 the hypercontractivity of the semigroup is equivalent to the log-Sobolev inequality for the associated Dirichlet form. This leads to an extensive study of the log-Sobolev inequality.

 However, as explained in \cite{BWY} 
 the log-Sobolev inequality does not hold
for the segment solution to a stochastic delay differential equation (SDDE). As the segment solution is a process on a functional space, the equation
 is also called a   functional stochastic differential equation (FSDE).  In this case, an   efficient tool to prove the hypercontractivity is the
 dimension-free Harnack inequality introduced in \cite{W97}, where diffusion semigroups on Riemannian manifolds are concerned.
By using the coupling by change of measures, this type Harnack inequality has been  established for various 
 stochastic equations, see the recent monograph  \cite{W13} and references within.  The aim of the present paper is to prove
 the hypercontractivity for   functional stochastic partial differential equations (FSPDEs) in Hilbert spaces. We will consider
 non-degenerate noise  and degenerate noise, respectively, so that   the corresponding results   derived in  \cite{BWY}  for finite-dimensional FSDEs as well as   in \cite{Wang14}  for
degenerate   SPDEs  are extended.

In the recent paper \cite{Wang14}, the second named author developed
a general criterion on the hypercontractivity by using the Harnack
inequality of the semigroup, the concentration property of the
underlying probability measure, and the coupling property. In
general, let $P_t$ be a Markov semigroup on $L^2(\mu)$ for a
probability space $(E,\F,\mu)$ such that $\mu$ is $P_t$-invariant.
By definition, $P_t$ is hypercontractive if $\|P_t\|_{2\to 4} =1$
holds for large enough $t>0$, where $\|\cdot\|_{2\to 4}$ is the
operator norm from $L^2(\mu)$ to $L^4(\mu)$.  For any $(x,y)\in
E\times E$, a process $(X_t,Y_t)$ on $E\times E$ is called a
coupling for the Markov semigroup with initial point $(x,y)$ if
$$P_tf(x)= \E f(X_t),\ \ P_t f(y)= \E f(Y_t),\ \ t\ge 0, f\in \B_b(E),$$ where $\B_b(E)$ stands for  the set of all bounded measurable functions
defined
on $E$.

 The general criterion  due to Wang \cite{Wang14} is stated  as follows.

\beg{thm}[\cite{W13}]\label{T1.1} Assume that   the following three conditions hold for some
measurable functions $\rr:E\times E\mapsto(0,\8)$ and
$\phi:[0,\8)\mapsto(0,\8)$ such that $\lim_{t\to\8}\phi(t)=0:$
\begin{enumerate}
\item[{\rm (i)}] {\bf (Harnack Inequality)} There exist constants $t_0,c_0>0$ such that
\begin{equation*}
(P_{t_0}f(\xi))^2\le
(P_{t_0}f^2(\eta))\e^{c_0\rr(\xi,\eta)^2},~~~f\in\B_b(E),~\xi,\eta\in
E;
\end{equation*}
\item[{\rm (ii)}] {\bf (Coupling Property)}  For any $(\xi,\eta)\in E\times E$, there exists a coupling
$(X_t,Y_t)$ for the Markov semigroup $P_t$ such that
\begin{equation*}
\rr(X_t,Y_t)\le\phi(t)\rr(\xi,\eta),~~~t\ge0;
\end{equation*}
\item[{\rm (iii)}] {\bf (Concentration Property)} There exists   $\vv>0$ such that
$(\mu\times\mu)(\e^{\vv\rr(\cdot,\cdot)^2})<\8$.
\end{enumerate}Then $P_t$ is hypercontractive  and   compact in $L^2(\mu)$ for large enough $t>0$,  and
\begin{equation}\label{EXX} \beg{split}
&\mu((P_tf)\log P_t f)\le c\e^{-\aa t}\mu(f\log f),\ \ t\ge 0,   f \ge 0, \mu(f)=1;\\
&\|P_t-\mu\|_2^2:= \sup_{\mu(f^2)\le 1}\mu\big((P_t
f-\mu(f))^2\big)\le c\e^{-\aa t},\ \ t\ge 0
\end{split}\end{equation} hold for some constants $c,\aa>0.$
 \end{thm}

 We will apply the previous criterion to non-degenerate and degenerate FSPDEs, respectively. To state our main results,
we first introduce some notation.

For  two separable Hilbert spaces $\H_1,\H_2$,   let $\L(\H_1,\H_2)$
(respectively,  $\L_{HS}(\H_1,\H_2)$) be the set of all bounded
(respectively, Hilbert-Schmidt) linear operators from $\H_1$ to
$\H_2$. We will use $|\cdot|$ and $\<\cdot,\cdot\>$ to denote the
norm and the inner product on a Hilbert space, and let $\|\cdot\|$
and $\|\cdot\|_{HS}$ stand for the operator norm and the
Hilbert-Schmidt norm for a linear operator. Below we introduce our
main results for non-degenerate FSPDEs and degenerate FSPDEs,
respectively.

\subsection{ Non-Degenerate FSPDEs}

Let $\H$ be a separable Hilbert space.
\smallskip
For a fixed constant $r_0>0$, let
$\mathscr{C}=C([-r_0,0];\mathbb{H})$ be  equipped with the uniform
norm $\|f\|_\infty:=\sup_{-r_0\leq\theta\leq0}|f(\theta)|$. For
$t\geq0$ and   $h\in C([-r_0,\infty);\mathbb{H})$, let
$h_t\in\mathscr{C}$ be  such that
$h_t(\theta)=h(t+\theta),\theta\in[-r_0,0]$.

\smallskip

Let $W(t)$ be a cylindrical Wiener process on $\H$ under a complete
filtered probability space $(\OO,\F,\{\F_t\}_{t\ge 0}, \P)$; that
is,
$$W(t)=\sum_{i=1}^\infty B_i(t)e_i,\ \ t\ge 0$$ for an orthonormal basis $\{e_i\}_{i\ge 1}$ on $\H$ and a sequence of independent one-dimensional
 Wiener processes $\{B_i(t)\}_{i\ge 1} $
on $(\OO,\F,\{\F_t\}_{t\ge 0}, \P)$.

Consider the following  FSPDE on $\H$:
\begin{equation}\label{*}
\d X(t)=\{AX(t)+b(X_t)\}\d t+\si\d W(t),~~t>0,~~X_0=\xi\in\C,
\end{equation}
where $(A,\D(A))$ is a densely defined closed operator on $\H$
generating a $C_0$-semigroup $\e^{tA}$, $b:\C\mapsto\H$ is
measurable, $(\si, \D(\si))$   is a densely defined  linear operator
on $\H$.   We assume that $A, b$ and $\si$ satisfy the following
conditions.

\begin{enumerate}
\item[{\bf (A1)}] ($-A, \mathscr{D}(A))$  is   self-adjoint  with
  discrete spectrum
 $0<\ll_1\le \ll_2\le \cdots $ counting  multiplicities such that $\ll_i\uparrow\infty$. Moreover,  there exists a constant
 $\dd\in (0,1)$ such that,  for every $t>0$, $\e^{-t(-A)^{1-\dd}}\si$ extends to a unique Hilbert-Schmidt operator on $\H$ which is denoted again by $\e^{-t(-A)^{1-\dd}}\si$ and satisfies
 \beq\label{JJ} \int_0^1 \|\e^{-t(-A)^{1-\dd}} \si\|_{HS}^2 \d t<\infty.\end{equation}
\item[{\bf (A2)}] There exists a constant $L>0$ such that $|b(\xi)-b(\eta)|\le L\|\xi-\eta\|_\8,~\xi,\eta\in\C$.
\item[{\bf (A3)}] $\si$ is invertible, i.e.,  there exists  $\si^{-1}\in \L(\H, \H)$ such that $\si^{-1}\H\subset \D(\si)$ and $\si\si^{-1}=I$, the identity operator.
\end{enumerate}

We first observe that assumptions {\bf (A1)} and {\bf (A2)} imply the existence and  uniqueness of continuous mild solutions to \eqref{*}; that is,
  for any $\F_0$-measurable random variable $X_0=\xi\in\C$,  there exists a unique continuous adapted process $\{X(t)\}_{t\ge r_0}$ on $\H$ such that $\P$-a.s.
\begin{equation}\label{a1}
X(t)=\e^{t A}\xi(0)+\int_0^t\e^{(t-s)A}b(X_s)\d
s+\int_0^t\e^{(t-s)A}\si\d W(s),\ \ t\ge 0.
\end{equation} To this end, it suffices to show that
 \eqref{JJ} implies
\beq\label{JJ0} \int_0^1\|\e^{tA} \si\|_{HS}^{2(1+\vv)}\d t<\infty
\end{equation} for some $\vv>0$, see,    for instance, \cite[Theorem
4.1.3]{W13}.
 To prove \eqref{JJ0}, we
   reformulate condition \eqref{JJ} using the eigenbasis $\{e_i\}_{i\ge 1} $ of $A$, i.e., $\{e_i\}_{i\ge 1}$ is
   an orthonormal basis of $\H$ such that $Ae_i=-\ll_i e_i, i\ge 1.$
By noting that
$$\|\e^{-t(-A)^{1-\dd}} \si\|_{HS}^2=\|(\e^{-t(-A)^{1-\dd}} \si)^*\|_{HS}^2= \sum_{j=1}^\infty |(\e^{-t(-A)^{1-\dd}}\si)^*e_j|^2 = \sum_{j=1}^\infty \e^{-2 \ll_j^{1-\dd}t} |\si^* e_j|^2,$$
\eqref{JJ} is equivalent to \beq\label{JJ'} \sum_{j=1}^\infty
\ff{|\si^* e_j|^2}{\ll_j^{1-\dd}}<\infty.\end{equation} This implies
that  $\mu_j:= \ff{|\si^* e_j|^2}{\ll_j^{1-\dd}} (j\ge 1)$ gives
rise to a finite measure on $\mathbb N$,  so that by  H\"older's
inequality,
\beg{equation*}\beg{split}  &\int_0^1\|\e^{tA} \si\|_{HS}^{2(1+\vv)}\d t  = \int_0^1 \Big(\sum_{j=1}^\infty \e^{-2 \ll_j t} |\si^* e_j|^2\Big)^{1+\vv}\d t \\
&= \int_0^1 \Big(\sum_{j=1}^\infty \mu_j  \e^{-2 \ll_j t}
\ll_j^{1-\dd}  \Big)^{1+\vv}\d t
  \le C \int_0^1 \Big(\sum_{j=1}^\infty\mu_j \ll_j^{(1+\vv)(1-\dd)} \e^{-2(1+\vv)\ll_j t}\Big) \d t \\
&\le C \sum_{j=1}^\infty \ff{ |\si^* e_j|^2} { \ll_j^{1- \vv (1-\dd)}} <\infty,\ \ \vv\le \ff\dd{1-\dd},\end{split}\end{equation*}
where $C:= (\sum_{i=1}^\infty \mu_i)^\vv.$ Thus, \eqref{JJ} implies \eqref{JJ0} for $\vv\in (0, \ff\dd{1-\dd}].$

To emphasize the initial datum $X_0=\xi\in\C$, we denote the
solution and the segment solution by $\{X^\xi(t)\}_{t\ge-r_0}$ and
$\{X_t^\xi\}_{t\ge0}$, respectively.  Then the Markov semigroup for
the segment solution is defined as
 \begin{equation}\label{w8}
P_tf(\xi)=\E f(X_t^\xi),~f\in\B_b(\C),~\xi\in\C.
\end{equation}

  We are ready to state the main result in this part.

\begin{thm}\label{T1.2}
  Let {\bf (A1)}-{\bf (A3)} hold.  If $\ll:=\sup_{s\in (0,\ll_1]} \big(s-L\e^{sr_0}\big)>0$,
then  the following assertions hold. \beg{enumerate}
\item[$(1)$] $P_t$ has a unique invariant
probability measure $\mu$ such that $\mu(\e^{\vv\|\cdot\|_\8^2})<\8$
for some $\vv>0$.
\item[$(2)$] $P_t$ is hypercontractive  and   compact in $L^2(\mu)$ for large enough $t>0$,   and $\eqref{EXX}$ holds
for some constants $c,\aa>0$.
\item[$(3)$] For any $t_0>r_0$, there exists a constant  $  c>0$ such that
$$\|\mu_t^\xi-\mu_t^\eta\|_{\mbox{var}}\le  c\|\xi-\eta\|_\infty \e^{-\ll t},\ \ t\ge t_0,$$ where  $\|\cdot\|_{\mbox{var}}$ is the total variational norm and  $\mu_t^\xi$ stands for  the law of $X_t^\xi$ for   $(t,\xi)\in
 [0,\infty)\times\C$.
\end{enumerate}
\end{thm}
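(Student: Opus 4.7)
The plan is to take $\rho(\xi,\eta):=\|\xi-\eta\|_\infty$ on $\C\times\C$ and to verify the three hypotheses of Theorem~\ref{T1.1}; assertions (1) and (2) will then follow at once, and (3) is obtained by combining the Harnack inequality in (i) with the coupling contraction from (ii). I first verify the coupling property. Using the synchronous coupling, in which $X$ solves \eqref{*} from $\xi$ and $Y$ solves the same equation from $\eta$ driven by the same $W$, the difference $Z(t)=X(t)-Y(t)$ satisfies the deterministic mild equation
\begin{equation*}
Z(t)=\e^{tA}(\xi(0)-\eta(0))+\int_0^t\e^{(t-s)A}\big(b(X_s)-b(Y_s)\big)\,\d s.
\end{equation*}
From $\|\e^{tA}\|\le\e^{-\ll_1 t}$ (by {\bf (A1)}) and the Lipschitz bound {\bf (A2)} one gets $|Z(t)|\le\e^{-\ll_1 t}\|\xi-\eta\|_\infty+L\int_0^t\e^{-\ll_1(t-s)}\|Z_s\|_\infty\,\d s$. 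A Halanay-type delay Gronwall argument, exploiting a choice of $s\in(0,\ll_1]$ for which $s-L\e^{sr_0}>0$ as provided by the hypothesis $\ll>0$, then yields $\|Z_t\|_\infty\le C\e^{-\ll t}\|\xi-\eta\|_\infty$ with $C>0$ independent of $\xi,\eta$, so (ii) holds with $\phi(t)=C\e^{-\ll t}$.

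Next I turn to the Harnack inequality (i). Here I would use, not the synchronous coupling but the coupling-by-change-of-measure method of \cite{W13}, whose applicability is ensured by the invertibility of $\si$ in {\bf (A3)}. Fixing $t_0>r_0$, I would construct on the same probability space an adapted process $\tilde Y$ starting at $\eta$ whose drift is perturbed by a term of the form $\si^{-1}(\text{correction})$ chosen so that $\tilde Y_{t_0}=X_{t_0}$; an application of Girsanov's theorem produces a new measure $\tilde\P=R\,\d\P$ under which $\tilde Y$ solves \eqref{*} from $\eta$. Standard $L^2$ estimates on the correction, using \eqref{JJ} and the Lipschitz bound {\bf (A2)}, yield $\E R^2\le\exp(c_0\|\xi-\eta\|_\infty^2)$ for some $c_0>0$. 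Cauchy--Schwarz then gives $(P_{t_0}f(\xi))^2=(\tilde\E f(\tilde Y_{t_0}))^2\le P_{t_0}f^2(\eta)\cdot\E R^2$, which is exactly (i). The subtle point is that the correction must be built piecewise on sub-intervals shorter than $r_0$ so that the delay does not destroy the $L^2$ estimate, as in the finite-dimensional FSDE treatment of \cite{BWY}.

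The contraction from (ii) makes $\{P_t\}$ strictly contractive in the $L^1$-Wasserstein distance induced by $\rho$, so a standard Cauchy argument produces a unique invariant probability measure $\mu$. To verify (iii) and the integrability part of (1), I would split the mild solution as $X_t^\xi=V_t^\xi+U_t$, where $U_t$ is the segment of the stochastic convolution $U(t)=\int_0^t\e^{(t-s)A}\si\,\d W(s)$ and $V_t^\xi$ absorbs the deterministic and Lipschitz contributions. By \eqref{JJ}, $U_t$ is a $\C$-valued centred Gaussian element, and the Fernique type inequality for infinite-dimensional Gaussian processes announced in the abstract will provide $\sup_{t\ge r_0}\E\exp(\vv'\|U_t\|_\infty^2)<\infty$ for some $\vv'>0$. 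Combining this with a Gronwall bound on $\|V_t^\xi\|_\infty$ and passing to the limit along $\delta_\xi P_t\to\mu$, one obtains $\mu(\e^{\vv\|\cdot\|_\infty^2})<\infty$ for some $\vv>0$, which through $\rho(\xi,\eta)^2\le 2(\|\xi\|_\infty^2+\|\eta\|_\infty^2)$ yields (iii). With (i)--(iii) in hand, Theorem~\ref{T1.1} delivers (2).

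Finally, assertion (3) follows from combining (i) and (ii). The Harnack inequality with power $2$, applied to indicators $f=\mathbf 1_A$ and optimised, gives a uniform total variation bound $\|\mu_{t_0}^\zeta-\mu_{t_0}^{\zeta'}\|_{\mathrm{var}}\le c_1\|\zeta-\zeta'\|_\infty$ for all $\zeta,\zeta'\in\C$. The Markov property together with the synchronous coupling contraction of (ii) on $[0,t-t_0]$ then yields $\|\mu_t^\xi-\mu_t^\eta\|_{\mathrm{var}}\le c_1\E\|X_{t-t_0}^\xi-X_{t-t_0}^\eta\|_\infty\le c_1 C\,\e^{-\ll(t-t_0)}\|\xi-\eta\|_\infty$, which is the claimed rate. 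The main technical obstacle in this plan is the Fernique type estimate used in the third step: the pointwise Hilbert--Schmidt integrability \eqref{JJ} does not obviously control $\sup_{-r_0\le\theta\le 0}|U(t+\theta)|$ in a uniform Gaussian tail sense, and an infinite-dimensional Fernique type argument adapted to the segment process is required.
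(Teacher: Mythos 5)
Your plan follows the paper's route step by step: take $\rho(\xi,\eta)=\|\xi-\eta\|_\infty$ and verify (i)--(iii) of Theorem \ref{T1.1}. The synchronous coupling plus delay-Gronwall argument is exactly the paper's derivation of $\|X_t^\xi-X_t^\eta\|_\infty\le \e^{\ll_1 r_0}\e^{-\ll t}\|\xi-\eta\|_\infty$ (after assuming, as one may, that the supremum defining $\ll$ is attained at $\ll_1$); the Harnack inequality is not reproved in the paper but quoted from \cite[Theorem 4.2.4]{W13}, so your Girsanov sketch is an acceptable substitute; the Wasserstein--Cauchy construction of $\mu$ and the combination of a Harnack-implied total-variation Lipschitz bound with the coupling contraction for assertion (3) also match. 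One local imprecision: applying the power-$2$ Harnack inequality directly to $f=\mathbf 1_A$ does \emph{not} give $\|\mu_{t_0}^{\zeta}-\mu_{t_0}^{\zeta'}\|_{\mathrm{var}}\le c_1\|\zeta-\zeta'\|_\infty$, since with $a=P_{t_0}\mathbf 1_A(\zeta)$ one only gets $a-a^2\e^{-c_0\rho^2}$, which does not vanish as $\rho\to0$. You must either apply the inequality to $1+\vv\mathbf 1_A$ and optimise over $\vv$, or, as the paper does, pass through the log-Harnack inequality of \cite{W10} and \cite[Proposition 2.3]{ATW14}.

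The genuine gap is the one you name but do not close: the exponential integrability $\sup_{t\ge0}\E\exp(\vv'\|Z_t\|_\infty^2)<\infty$ of the segment of the stochastic convolution, which is the essential input to both the concentration condition (iii) and the bound $\mu(\e^{\vv\|\cdot\|_\infty^2})<\infty$ in assertion (1). This cannot be waved through: finite-dimensional Fernique bounds degenerate as the dimension grows, and the paper devotes Section 2 to a modified statement (Theorem \ref{T2.1}) whose hypothesis is the summability $\theta=\sum_i\dd_i^2\log(\e+\dd_i^{-1})<\infty$ of weighted one-dimensional Fernique constants. Applying it to $\gg(t)=\int_0^{(t_0-tr_0)^+}\e^{(t_0-tr_0-s)A}\si\,\d W(s)$ requires the explicit estimates $\GG_i\le|\si^*e_i|(2\ll_i)^{-1/2}$ and $\E|\gg_i(t)-\gg_i(t')|^2\le c_1|t-t'|^{\dd/2}|\si^*e_i|^2\ll_i^{-(1-\dd/2)}$, uniformly in $t_0$, whence $\dd_i\le c_2|\si^*e_i|\ll_i^{-(1/2-\dd/4)}$, and then the reformulation \eqref{JJ'} of assumption \eqref{JJ} to conclude $\theta<\infty$; without this computation your third step is an assertion, not a proof. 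A second, smaller omission in the same step: after the Gronwall bound $\|X_t\|_\infty\le c(1+\|Z_t\|_\infty+\int_0^t\|Z_s\|_\infty\e^{-\ll(t-s)}\d s)$, exponential integrability of the time integral does not follow from that of each $\|Z_s\|_\infty$ by mere ``combining''; the paper uses H\"older together with Jensen's inequality for the normalized measure $\nu(\d s)=\ll\e^{-\ll(t-s)}(1-\e^{-\ll t})^{-1}\d s$ to move the square of the integral inside the expectation.
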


\smallskip

To illustrate the above result, we present below an example where $\H=L^2(D;\d x)$ for a bounded domain in $\R^d$.

\paragraph{Example 1.1.} For a bounded domain $D\subset \R^d$, let $\H=L^2(D;\d x)$ and $A=-(-\DD)^\aa$, where $\DD$ is the Dirichlet  Laplacian on $D$ and 
$\aa>\ff d 2$ is a constant. Let $\si=I$ be the identity operator on $\H$, and $b(\xi)=  L \int_{-r_0}^0 \xi(r)\nu(\d r)$ for a signed measure $\nu$ on $[-r_0,0]$  with total variation $1;$ or $b(\xi)= \sup_{r\in [-r_0,0]}\<\xi(r), g(r)\>$ for some measurable $g: [-r_0,0]\to\H$ with $ \|g\|_\infty\le L.$
Then assertions in Theorem \ref{T1.2} hold provided
$$\ll:=\sup_{s\in (0, (d\pi^2)^\aa R(D)^{-2\aa})} (s-L \e^{s r_0})>0,$$where $R(D)$ is the diameter of $D$. 
\beg{proof} Since $A=-(-\DD)^\aa$, it is well known that the eigenvalues $\{\ll_i\}_{i\ge 1}$ of $A$ satisfy $\ll_i\ge ci^{\ff{2\aa}d} (i\ge 1)$ for some constant $c>0$. So, for $\aa>\ff d 2$ assumptions {\bf(A1)}-{\bf (A3)} hold for the above choices of $\H, A, \si$ and $b$. By Theorem \ref{T1.2}, it remains to prove
$\ll_1\ge \ff{(d\pi^2)^{\aa}}{R(D)^{2\aa}}$. Letting $\bar \ll_1$ be the first  eigenvalue of $-\DD$, by the definition of $A$  this is equivalent to $\bar\ll_1\ge \ff{d\pi^{2}}{R(D)^{2}}$. As $D$ is covered by a cube of edge length $R(D)$, by the domain-monotonicity  and the shift-invariance of the first Dirichlet eigenvalue of $-\DD$,
$\bar\ll_1$ is bounded below by the first Dirichlet eigenvalue of $-\DD$ on the cube $[0,R(D)]^d$, which is equal to $\ff{d\pi^{2}}{R(D)^{2}}$ with eigenfunction
$u(x):=\prod_{i=1}^d \sin\big(\ff{\pi x_i}{R(D)}\big). $ Then  the proof is finished. \end{proof}

\subsection{Degenerate FSPDEs}

Let $\H=\H_1\times\H_2$ for two separable Hilbert spaces $\H_1$ and
$\H_2$, and let $\C=C([-r_0,0];\mathbb{H})$ as in Subsection 1.1.
Consider the following degenerate FSPDE on $\H$:
\beq\label{E1} \beg{cases} \d X(t)= \{A_1X(t)+  BY(t)\}\d t,\\
 \d Y(t)= \{A_2Y(t)+ b(X_t,Y_t)\}\d t +\si \d W(t),
\end{cases}
\end{equation} where  $(A_i,\D(A_i))$ is a densely defined closed linear operator  on $\H_i$  generating a $C_0$-semigroup $\e^{t A_i}$  ($i=1,2$), $B\in \L(\H_2, \H_1)$,
$b: \C\mapsto \H_2$ is measurable, $(\si, \D(\si))$ is a densely
defined closed operator on $\H_2$, and  $W(t)$ is the cylindrical
Wiener process on $\H_2$. Corresponding to {\bf (A1)}-{\bf (A3)} in
the non-degenerate case, we make the following assumptions (see
\cite{Wang14} for the case without delay, i.e., $b(X_t,Y_t)$ depends
only on $X(t)$ and $Y(t)$).

\smallskip

\beg{enumerate}
\item[{\bf (B1)}] ($-A_2, \mathscr{D}(A_2))$  is   self-adjoint  with
  discrete spectrum
 $0<\ll_1\le \ll_2\le \cdots $ counting  multiplicities such that $\ll_i\uparrow\infty$,   $\si$ is invertible, and
$$ \int_0^1 \|\e^{-t(-A_2)^{1-\dd_0}} \si\|_{HS}^2 \d t<\infty$$  holds for some constant $\dd_0\in (0,1)$.
\item[{\bf (B2)}] There exist constants $K_1,K_2> 0$ such that
\begin{equation*}
|b(\xi_1,\eta_1)-b(\xi_2,\eta_2)|\le
K_1\|\xi_1-\eta_1\|_\8+K_2\|\xi_2-\eta_2\|_\8,~~(\xi_1,\eta_1),~(\xi_2,\eta_2)\in\C.
\end{equation*}
\item[{\bf (B3)}] $ A_1 \le \dd-\ll_1 $   for some constant  $ \dd\ge 0$; i.e., $\<A_1 x,x\>\le (\dd-\ll_1)|x|^2$ holds for all $x\in \D(A_1)$.
\item[{\bf (B4)}]  There exists $A_0\in \L(\H_1, \H_1)$ such that $B\e^{tA_2}= \e^{tA_1}\e^{tA_0} B$ holds for $t\ge 0$,  and
\begin{equation*}
Q_t:=\int_0^t\e^{sA_0}BB^*\e^{sA_0^*}\d s,~~~t\ge0
\end{equation*}
is invertible on $ \H_1$.
\end{enumerate} Obviously, when $\H_1=\H_2, \si=B=I$  and $ A_1=A_2$ with discrete spectrum $\{-\ll_i\}_{i\ge 1}$ such
that $\sum_{i=1}^\infty \ff{1}{\ll_i^{1-\dd}}<\infty$ holds for some
constant $\dd\in (0,1),$ then assumptions {\bf (B1)}, {\bf (B3)} and
{\bf (B4)} hold. See \cite{Wang14} for more examples, where $\H_2$
might be a subspace of $\H_1.$

Similarly to  the case without delay considered in  \cite{Wang14},
assumptions {\bf (B3)} and {\bf (B4)} will be used to prove the
Harnack inequality. Moreover, as explained in Subsection 1.1 for the
non-degenerate case, from   \cite[Theorem 4.1.3]{W13}  we conclude
that assumptions {\bf (B1)} and {\bf (B2)} imply the existence,
uniqueness and non-explosion of the continuous mild solution
$(X^{\xi,\eta}(t), Y^{\xi,\eta}(t))$ for any initial point $(\xi,
\eta)\in \C.$ Let $P_t$ be the  Markov semigroup generated by the
segment solution. We have
$$
P_tf(\xi,\eta)=\E\big[
f(X_t^{\xi,\eta},Y_t^{\xi,\eta})\big],~~f\in\B_b(\C),~~(\xi,\eta)\in\C,~~t\ge0.$$

\begin{thm}\label{T1.3} Assume
{\bf  (B1)}-{\bf (B4)}. If
\begin{equation}\label{c1}
\ll':=\ff1
2\Big(\dd+K_2+\ss{(K_2-\dd)^2+4K_1\|B\|}\Big)<\sup_{s\in (0,\ll_1]}  s\e^{-sr_0},
\end{equation}
then all assertions in Theorem $\ref{T1.1}$ hold with
$\ll:=\sup_{s\in (0,\ll_1]} \big(s-\e^{s r_0}\ll'\big)$.
\end{thm}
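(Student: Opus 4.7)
The overall plan is to verify the three hypotheses of Theorem \ref{T1.1} for the semigroup $P_t$ on the state space $E=\C$ equipped with the uniform norm, with an appropriate choice of the distance function $\rr$. Given the parallel with the non-degenerate case and with \cite{Wang14} (which treats \eqref{E1} without delay), the natural choice is $\rr((\xi_1,\eta_1),(\xi_2,\eta_2))=\|\xi_1-\xi_2\|_\infty+\|\eta_1-\eta_2\|_\infty$, possibly weighted to balance the two components. Once (i)--(iii) are established, assertions analogous to those in Theorem \ref{T1.2} follow immediately from Theorem \ref{T1.1}, together with an argument for existence/uniqueness of the invariant measure $\mu$ and the total variational estimate, which are standard consequences of the coupling contraction.

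For condition (ii) \emph{(Coupling Property)}, I would use the synchronous coupling $(X^{\xi_1,\eta_1},Y^{\xi_1,\eta_1})$ and $(X^{\xi_2,\eta_2},Y^{\xi_2,\eta_2})$ driven by the same noise $W$, so the difference equations become deterministic. Setting $\Delta_1(t)=X^{\xi_1,\eta_1}(t)-X^{\xi_2,\eta_2}(t)$ and $\Delta_2(t)=Y^{\xi_1,\eta_1}(t)-Y^{\xi_2,\eta_2}(t)$, assumption \textbf{(B3)} gives $\tfrac{\d}{\d t}|\Delta_1(t)|^2\le 2(\dd-\ll_1)|\Delta_1(t)|^2+2\|B\||\Delta_1(t)||\Delta_2(t)|$, while \textbf{(B2)} combined with the semigroup estimate from \textbf{(B1)} controls $|\Delta_2(t)|$ in terms of $\|\Delta_1\|_\infty$ and $\|\Delta_2\|_\infty$ on the past interval of length $r_0$. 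A standard Gronwall-type argument with the ansatz $\e^{st}(|\Delta_1(t)|+\gamma|\Delta_2(t)|)$ for $s\in(0,\ll_1]$ and a well-chosen ratio $\gamma>0$ reduces the problem to showing that the maximal linear growth rate of the resulting $2\times2$ coefficient matrix is precisely $\ll'$ defined in \eqref{c1}. Under \eqref{c1} one then obtains $\rr(\text{segment}_t,\text{segment}_t)\le C\e^{-\ll t}\rr(\xi,\eta)$ with $\ll=\sup_{s\in(0,\ll_1]}(s-\e^{sr_0}\ll')>0$, which is the required $\phi(t)=C\e^{-\ll t}$.

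For condition (i) \emph{(Harnack Inequality)}, I would transfer the method developed in \cite{Wang14} for the delay-free degenerate case to the present segment setting. The invertibility of $\si$ from \textbf{(B1)} and the intertwining/Kalman-type condition \textbf{(B4)} on $(A_1,A_2,B,Q_t)$ are exactly the ingredients needed to construct a coupling by change of measure between the solutions starting at $(\xi,\eta)$ and $(\xi',\eta')$. The strategy is to force the two solutions to coincide at a large time $t_0>r_0$: one chooses a target control that drives $X-X'$ to zero via the $B\e^{tA_2}=\e^{tA_1}\e^{tA_0}B$ identity and uses $Q_{t_0}^{-1}$ together with the $\si^{-1}$ to realize this control as a Girsanov drift. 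Controlling the Radon--Nikodym density via \textbf{(B1)} and the Hilbert--Schmidt integrability yields a Harnack inequality of the form in (i) with $\rr$ as above and some constants $t_0>r_0$ and $c_0>0$, as in \cite[Section 3]{Wang14}; the only new feature is that the control must equate the full segments, not just endpoint values, which adds finite pieces to the cost estimate but no conceptual difficulty.

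Condition (iii) \emph{(Concentration Property)} is the main obstacle, and this is precisely where the infinite-dimensional Fernique type inequality advertised in the abstract enters. First, the coupling contraction from step two gives existence and uniqueness of an invariant probability measure $\mu$ for $P_t$. To show $(\mu\times\mu)(\e^{\vv\rr(\cdot,\cdot)^2})<\8$ it suffices to show $\mu(\e^{\vv\|\cdot\|_\infty^2})<\8$ on each factor. Here $\mu$ is the law of the stationary segment $(X_t^\infty,Y_t^\infty)$; splitting the mild formulation into a Gaussian stochastic convolution $Z(t)=\int_{-\infty}^{t}\e^{(t-s)A_2}\si\,\d W(s)$ in the $Y$-component (and its $B$-convolution in the $X$-component), a drift term controlled by \textbf{(B2)} via Gronwall, and the deterministic part, the exponential square-moment of $\|\cdot\|_\infty$ reduces to the analogous bound for a centered Gaussian process on $\C$, which is furnished by the Fernique type inequality proved later in the paper applied to the integrability \textbf{(B1)}. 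The drift contribution is handled by absorbing $K_1,K_2$ through the dissipativity already used in step two, so that only a small $\vv>0$ is needed. With all three conditions verified, Theorem \ref{T1.1} yields hypercontractivity, $L^2$-compactness, \eqref{EXX}, and the total variational convergence with rate $\ll$.
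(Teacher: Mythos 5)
Your proposal is correct and follows essentially the same route as the paper: a Girsanov coupling-by-change-of-measure built from \textbf{(B3)}--\textbf{(B4)} for the Harnack inequality, a synchronous coupling with a weighted norm $\aa\|\cdot\|_\infty+\|\cdot\|_\infty$ whose optimal growth rate is exactly $\ll'$ for condition (ii), and the infinite-dimensional Fernique inequality applied to the stochastic convolution for the concentration property. The only minor difference is that the paper derives $\mu(\e^{\vv\|\cdot\|_\infty^2})<\infty$ by first proving a uniform-in-$t$ exponential moment bound for the segment solution started at $0$ and then passing to the weak limit (the moment bound is also what makes the Wasserstein--Cauchy argument for existence of $\mu$ work), rather than working directly with the stationary convolution from $-\infty$.
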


\paragraph{Examples 1.2.} Let $\H_1=\H_2=L^2(D;\d x)$ and $\si=B=I, A_1=A_2=-(-\DD)^\aa$ for some $\aa>\ff d 2$ as in Example 1.1. Then   assumptions {\bf (B1)}, {\bf (B3)} and
{\bf (B4)} hold. See \cite{Wang14} for more examples, where $\H_2$
might be a subspace of $\H_1.$ To verify {\bf (B2)} we take, for instances, 
$$b(\xi,\eta)= K_1\int_{-r_0}^0 \xi\d\nu_1+K_2\int_{-r_0}\eta\d\nu_2$$ for some signed measures $\nu_1,\nu_2$ on $[-r_0,0]$ with total variations not larger than $1$; or simply 
$b(\xi,\eta)= \|K_1\xi+K_2\eta\|_\infty$ where interactions exist between $\xi$ and $\eta$. 

\

The remainder of this paper is organized as follows. In Section 2 we
present a Fernique type inequality for infinite-dimensional Gaussian
processes, which will be used to prove the concentration condition
required in Theorem \ref{T1.1}(3).     Theorems \ref{T1.2} and
\ref{T1.3} are proved in Sections 3 and 4, respectively.

\section{ Infinite-dimensional Fernique's inequality}

 In \cite{F},  Fernique introduced an inequality for the distribution of the maximum of Gaussian processes. To prove the exponential integrability
  of $\|X_t\|_\infty$ for FSPDEs, one needs an infinite-dimensional version of this inequality. However, as the dimension goes to infinity,
 existing Fernique's inequality for multi-dimensional Gaussian processes becomes invalid.
 So, we modify the   inequality so that it holds also in infinite-dimensions.
 To this end, we first recall the inequality for one-dimensional Gaussian processes (see, e.g., \cite[page 49]{Berman} for the multi-dimensional  case).

\beg{lem}[Fernique's inequality] \label{LL1} Let $\{\gg(t)\}_{t\in
[0,1]}$ be a continuous Gaussian process on $\R$ with zero mean and
$\GG= \sup_{t\in [0,1]} (\E \gg(t)^2)^{\ff 1 2}<\infty$. Let
$$\phi(r):= \sup_{s,t\in [0,1], |s-t|\le r} \big(\E|\gg(s)-\gg(t)|^2\big)^{\ff 1 2},\ \ r\in [0,1].$$
If $\theta:=\int_1^\infty \phi(\e^{-s^2})\d s<\infty$, then
$$\P\Big(\max_{t\in[0,1]} |\gg(t)|\ge r\big(\GG  + \big(2+\ss 2\big)\theta\big)\Big)\le \ff{5\e} 2 \int_r^\infty \e^{-\ff 12 s^2} \d s,\ \ r\ge \ss{5}.$$
\end{lem}

Now, we call a process $\{\gg(t)\}_{t\in [0,1]}$  on the Hilbert
space $\H$  a cylindrical continuous Gaussian  process, if, for an
orthonormal basis $\{e_i\}_{i\ge 1}$, every  one-dimensional process
$\gg_i(t):= \<\gg(t),e_i\>$ is a continuous  Gaussian process. For a
cylindrical continuous Gaussian process $\gg(t)$ with zero mean, let
\beg{equation*}\beg{split}
&\phi_i(r)= \sup_{s,t\in [0,1], |s-t|\le r} \big(\E |\gg_i(t)-\gg_i(s)|^2\big)^{\ff 1 2},\ \ r\in [0,1],\\
&\GG_i= \sup_{t\in [0,1]} (\E \gg_i(t)^2)^{\ff 1 2},\ \ \dd_i=
\GG_i+ \big(2+\ss 2\big)\int_1^\infty \phi_i(\e^{-s^2})\d s,\ \ \
i\ge 1.\end{split}\end{equation*}

\beg{thm}\label{T2.1} Let $\gg(t)$ be a cylindrical continuous
Gaussian process on $\H$ with zero mean such that \beq\label{TH}
\theta:= \sum_{i=1}^\infty \dd_i^2
\log(\e+\dd_i^{-1})<\infty.\end{equation} Then, for any positive
constant
 $\ll < \min_{i\ge 1}  \ff{\log(\e +\dd_i^{-1})}{2\theta},$ there exists a constant $c>0$ such that
\beq\label{TH2} \P\Big(\max_{t\in [0,1]} |\gg(t)|  \ge r \Big)\le c\e^{-\ll r^2},\ \ r\ge 0.\end{equation}\end{thm}

 \beg{proof}  Let $\tt\ll= \min_{i\ge 1}  \ff{\log(\e +\dd_i^{-1})}{2\theta}.$  Obviously, \eqref{TH} implies $\lim_{i\to\infty}\dd_i=0$ so that $\tt\ll>0.$ For any $\ll\in (0, \tt\ll)$, it suffices to prove \eqref{TH2} for some constant $c>0$ and large enough $r>0$.
 Below, we assume that
 \beq\label{RR} r^2 \ge \ff{5\theta\tt\ll}{\tt\ll-\ll}.\end{equation}
 In this case, $$r_i := \Big(\ff{r^2\log(\e +\dd_i^{-1})}\theta \Big)^{\ff 1 2} \ge \ff r{\ss \theta} \ge \ss 5,$$ so that Lemma \ref{LL1} implies
 $$\P\Big(\max_{t\in[0,1]} |\gg_i(t)|  \ge r_i \dd_i \Big) \le \ff{5 \e} 2 \int_{r_i}^\infty \e^{-\ff 1 2 s^2}\d s \le c_1 \e^{-\ff 1 2 r_i^2},\ \ i\ge 1$$
 for some constant $c_1>0.$ Then
 \beg{equation}\label{NN} \beg{split} & \P\Big(\max_{t\in [0,1]} |\gg(t)|  \ge r \Big)\le \P\Big(\sum_{i=1}^\infty \max_{t\in [0,1]} |\gg_i(t)|^2  \ge r^2 \Big) \\
 &\le \sum_{i=1}^\infty \P\Big( \max_{t\in [0,1]} |\gg_i(t)|^2  \ge \ff{r^2\dd_i^2\log(\e +\dd_i^{-1})}{\theta} \Big)= \sum_{i=1}^\infty \P\Big(\max_{t\in[0,1]} |\gg_i(t)|  \ge r_i \dd_i \Big)\\
 &\le c_1\sum_{i=1}^\infty \e^{-\ff 1 2 r_i^2} \le c_1\e^{-\ll r^2} \sum_{i=1}^\infty \exp\Big[ - r^2\Big(\ff{\log (\e+\dd_i^{-1})}{2\theta} -\ll\Big)\Big].\end{split}\end{equation}
 Since, by \eqref{RR} and the definition of $\tt\ll$, we have
 $$r^2\Big(\ff{\log (\e+\dd_i^{-1})}{2\theta} -\ll\Big)\ge \ff{r^2  \log (\e+\dd_i^{-1})}{2\theta}\Big(1-\ff\ll{\tt\ll}\Big) \ge \ff 5 2 \log (\e+\dd_i^{-1}),$$
 it follows from \eqref{TH} that
 $$ \sum_{i=1}^\infty \exp\Big[ - r^2\Big(\ff{\log (\e+\dd_i^{-1})}{2\theta} -\ll\Big)\Big]\le \sum_{i=1}^\infty \dd_i^{\ff 5 2} <\infty.$$
 Combining this with \eqref{NN}, we finish the proof.
 \end{proof}

\section{Proof of Theorem \ref{T1.2}}

We will verify conditions (i)-(iii) in Theorem \ref{T1.1}. Firstly,
according to \cite[Theorem 4.2.4]{W13}, assumptions {\bf (A1)}-{\bf
(A3)} implies that, for any $t_0>r_0$, there exists a constant
$c_0>0$ such that the following Harnack inequality holds:
\beq\label{HH1} \big(P_{t_0} f(\eta)\big)^2\le (P_{t_0} f^2(\xi)))
\e^{c_0\|\xi-\eta\|_\infty^2},\ \ \ \xi,\eta\in\C, f\in
\B_b(\C).\end{equation} That is, condition (i) holds for
$\rr(\xi,\eta):=\|\xi-\eta\|_\infty$.

To verify   (ii) and (iii), we will need the condition that $\ll:=
\sup_{s\in (0,\ll_1]} (s-L \e^{sr_0})>0.$ Without loss of
generality, we may and do assume that the maximum  is attained at
the point $\ll_1$; otherwise, in the following it suffices to
replace $\ll_1$ by $\ll_1'\in (0,\ll_1]$ which attains the maximum.
By {\bf (A1)}, {\bf (A2)}, and \eqref{a1}, one has
\begin{equation*}
\e^{\ll_1t
}|X^{\xi} (t)-X^\eta(t)| \le|\xi(0)-\eta(0)| +L\int_0^t\e^{\ll_1s}\|X_s^\xi-X_s^\eta\|_\8\d
s.
\end{equation*}
Then, we obtain that
\begin{equation}\label{eq5}
\begin{split}
\e^{\ll_1t
}\|X_t^\xi-X_t^\eta\|_\8&\le\e^{\ll_1r_0}\sup_{-r_0\le\theta\le0}(\e^{\ll_1(t+\theta)
}|X^\xi(t+\theta)-X^\eta(t+\theta)|)\\
&\le\e^{\ll_1r_0}\Big(\|\xi-\eta\|_\8+L\int_0^t\e^{\ll_1s}\|X_s^\xi-X_s^\eta\|_\8\d
s\Big).
\end{split}
\end{equation}
Thus, by Gronwall's inequality we derive that
\begin{equation}\label{eq2}
\|X_t^\xi -X_t^\eta\|_\8\le \e^{\ll_1r_0} \e^{-\ll
t}\|\xi-\eta\|_\8,~~t\ge0,~~\xi,\eta\in\C.
\end{equation} That is, condition (ii) holds.

To show condition (iii) in Theorem \ref{T1.1}, we  need to prove the
exponential integrability of the segment solution.

\begin{lem}\label{Fer} Assume {\bf (A1)} and {\bf (A2)}.  If $\ll>0$, then there exists an  $r>0$ such that \
\begin{equation}\label{a4}
 \sup_{t\ge 0} \E\e^{r\|X_t^\xi\|_\8^2}<\infty,\ \ \ \xi\in\C.
\end{equation}    \end{lem}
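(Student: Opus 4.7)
The idea is to split the mild solution into the stochastic convolution $Z(t):=\int_0^t\e^{(t-s)A}\si\,\d W(s)$ plus a drift remainder, derive a pathwise Gronwall bound of $\|X_t^\xi\|_\8$ in terms of $\{\|Z_s\|_\8\}_{s\le t}$, and then promote the infinite-dimensional Fernique inequality of Theorem \ref{T2.1} into a uniform-in-time exponential-square control of the segments $\|Z_u\|_\8$. From $\|\e^{tA}\|\le\e^{-\ll_1 t}$ (by {\bf (A1)}) and $|b(\xi)|\le|b(0)|+L\|\xi\|_\8$ (by {\bf (A2)}), the mild formulation \eqref{a1} yields
\[
|X^\xi(t)|\le\e^{-\ll_1 t}|\xi(0)|+\ff{|b(0)|}{\ll_1}+L\int_0^t\e^{-\ll_1(t-s)}\|X_s^\xi\|_\8\,\d s+|Z(t)|.
\]
For $t\ge r_0$, replacing $t$ by $t+\theta$ with $\theta\in[-r_0,0]$, bounding $\e^{-\ll_1(t+\theta-s)}\le\e^{\ll_1 r_0}\e^{-\ll_1(t-s)}$ and taking the supremum, one arrives, with $\psi(t):=\|X_t^\xi\|_\8$ and $A(t):=\e^{\ll_1 r_0}[\e^{-\ll_1 t}|\xi(0)|+|b(0)|/\ll_1+\|Z_t\|_\8]$, at the integral inequality $\psi(t)\le A(t)+L\e^{\ll_1 r_0}\int_0^t\e^{-\ll_1(t-s)}\psi(s)\d s$. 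As in the derivation of \eqref{eq2} one may assume the supremum defining $\ll$ is attained at $s=\ll_1$, so that $\ll=\ll_1-L\e^{\ll_1 r_0}>0$; setting $h(t):=\int_0^t\e^{-\ll_1(t-s)}\psi(s)\d s$ then gives $h'(t)\le-\ll h(t)+A(t)$, and integrating and substituting back produces
\[
\|X_t^\xi\|_\8\le C(1+\|\xi\|_\8)+\|Z_t\|_\8+C\int_0^t\e^{-\ll(t-s)}\|Z_s\|_\8\,\d s,\qquad t\ge r_0,
\]
for a constant $C$ depending only on $L,\ll_1,r_0,|b(0)|$; the regime $t\in[0,r_0]$ is handled by a direct Gronwall estimate on a compact interval together with the sub-Gaussianity of $\sup_{s\in[0,r_0]}|Z(s)|$.

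It therefore suffices to show $\sup_{u\ge 0}\E\e^{\vv\|Z_u\|_\8^2}<\8$ for some $\vv>0$. Writing $Z_i(s):=\<Z(s),e_i\>$ and using It\^o's isometry in the eigenbasis of $A$, one computes, uniformly in $u\ge 0$ and $\theta,\theta'\in[-r_0,0]$,
\[
\E Z_i(u+\theta)^2\le\ff{|\si^*e_i|^2}{2\ll_i},\qquad \E\bigl(Z_i(u+\theta)-Z_i(u+\theta')\bigr)^2\le\ff{|\si^*e_i|^2}{\ll_i}\bigl(1-\e^{-\ll_i|\theta-\theta'|}\bigr).
\]
A direct estimate of $\int_1^\infty\phi_i(\e^{-s^2})\d s$ yields $\dd_i^2\le C|\si^*e_i|^2\log(\e+\ll_i)/\ll_i$, and the inequality $\log(\e+\dd_i^{-1})\le C(1+\log\ll_i)$ reduces the summability hypothesis \eqref{TH} of Theorem \ref{T2.1} to $\sum_i|\si^*e_i|^2(\log\ll_i)^2/\ll_i<\8$, which follows from \eqref{JJ'} because $(\log\ll_i)^2/\ll_i^\dd$ is bounded in $i$. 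Theorem \ref{T2.1} then furnishes constants $c_1,\vv_0>0$ \emph{independent of $u$} such that $\P(\|Z_u\|_\8\ge r)\le c_1\e^{-\vv_0 r^2}$, so $\sup_{u\ge 0}\E\e^{\vv\|Z_u\|_\8^2}<\8$ for every $\vv\in(0,\vv_0)$.

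Finally, Cauchy-Schwarz gives $\big(\int_0^t\e^{-\ll(t-s)}\|Z_s\|_\8\d s\big)^{2}\le\ll^{-1}\int_0^t\e^{-\ll(t-s)}\|Z_s\|_\8^2\d s$, and Jensen's inequality applied to $\exp$ with the probability measure $\ll\e^{-\ll(t-s)}\d s/(1-\e^{-\ll t})$ on $[0,t]$ gives
\[
\E\exp\Big(\eta\Big(\int_0^t\e^{-\ll(t-s)}\|Z_s\|_\8\,\d s\Big)^{\!2}\Big)\le\sup_{s\in[0,t]}\E\e^{\eta\|Z_s\|_\8^2/\ll^2},
\]
which stays bounded in $t$ as soon as $\eta/\ll^2<\vv_0$. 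Inserting this together with the Fernique bound into the pathwise estimate of the first paragraph, and using $\e^{r(a+b+c)^2}\le\e^{3ra^2+3rb^2+3rc^2}$ followed by Cauchy-Schwarz in $\P$, proves \eqref{a4} for $r>0$ small enough. The main obstacle is the uniform-in-$u$ verification of \eqref{TH} under the weak trace-type input \eqref{JJ'}; once that is in hand the remaining steps are routine Gronwall and Jensen/Cauchy-Schwarz arguments.
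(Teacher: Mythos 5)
Your proof follows essentially the same route as the paper's: a pathwise Gronwall estimate reduces \eqref{a4} to a time-uniform exponential square moment of the segment of the stochastic convolution $Z_t$, which is then obtained from Theorem \ref{T2.1} by estimating $\GG_i$, $\phi_i$ and $\dd_i$ in the eigenbasis of $A$ and checking \eqref{TH} against \eqref{JJ'}, followed by the same Jensen/Cauchy--Schwarz step. The only (harmless) deviations are that you bound the increments via $1-\e^{-\ll_i|\theta-\theta'|}$ rather than the paper's H\"older bound $|\e^{-s}-\e^{-t}|\le c(r)|s-t|^{r}$, yielding the slightly sharper $\dd_i^2\le C|\si^*e_i|^2\log(\e+\ll_i)/\ll_i$ (note that your step $\log(\e+\dd_i^{-1})\le C(1+\log\ll_i)$ implicitly uses a lower bound $\dd_i\ge \GG_i\ge c\ll_i^{-1/2}$, available since $\si$ is invertible), and that you keep a general initial datum instead of first reducing to $\xi\equiv 0$ via \eqref{eq2}.
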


\begin{proof} (a) We first use Theorem \ref{T2.1} to prove
\beq\label{a5} \sup_{t\ge 0} \E \e^{\vv
\|Z_{t}\|_\infty^2}<\infty\end{equation}   for   some $\vv>0$, where
\beq\label{b2} Z_t(\theta):= \int_0^{(t+\theta)^+}
\e^{(t+\theta-s)A} \si\d W(s),\ \ t\ge 0,~ \theta\in
[-r_0,0].\end{equation} To this end, for fixed $t_0>0$ let
$$\gg(t)= \int_0^{(t_0-tr_0)^+}\e^{(t_0-tr_0-s)A}\si\d W(s),\ \ t\in [0,1].$$ Then,  \eqref{b2}  implies
\beq\label{BB2} \|Z_{t_0}\|^2_\infty = \sup_{t\in
[0,1]}|\gg(t)|^2.\end{equation} Letting $\{e_i\}_{i\ge 1}$ be the
eigenbasis of $A$, we have \beq\label{BB3} \gg_i(t):=\<\gg(t),e_i\>=
\int_0^{(t_0-tr_0)^+}\e^{-\ll_i(t_0-tr_0-s)}\<\si^* e_i, \d W(s)\>,\
\ t\in [0,1].\end{equation} Obviously, \beq\label{BB4}
\GG_i:=\sup_{t\in [0,1]} \big(\E \gg_i(t)^2\big)^{\ff 1 2} \le
|\si^* e_i| \bigg(\int_0^\infty \e^{-2\ll_i s}\d s\bigg)^{\ff 1 2} =
\ff{|\si^* e_i|}{\ss{2\ll_i}},\ \ i\ge 1.\end{equation} Moreover,
note that, for any $r\in (0,1)$, there exists a constant $c(r)>0$
such that $|\e^{-s}-\e^{-t}|\le c(r) |s-t|^r$ holds for all $s,t\ge
0.$ Then, \eqref{BB3}, implies that for any $0\le t'\le t\le 1$,
\beg{equation*}\beg{split} &\E |\gg_i(t)-\gg_i(t')|^2\\
&=  |\si^* e_i|^2\bigg( \int_0^{(t_0-tr_0)^+} \e^{-2\ll_i (t_0-tr_0-s)}\big(1-\e^{-\ll_i(t-t')r_0}\big)^2 \d s +   \int_{(t_0-tr_0)^+}^{(t_0-t'r_0)^+}
\e^{-2\ll_i (t_0-t'r_0-s)} \d s\bigg)\\
& \le  |\si^* e_i|^2\bigg(\ff{c(\ff\dd 4)^2 [r_0(t-t')]^{\ff\dd 2}}{2\ll_i^{1-\ff \dd 2}} +
\ff{c(\ff \dd 2)[2r_0(t-t')]^{\ff\dd 2}}{2\ll_i^{1-\ff \dd 2}}\bigg)\\
&=: \ff{c_1 (t-t')^{\ff\dd 2}|\si^* e_i|^2}{\ll_i^{1-\ff\dd 2}},\ \
\ i\ge 1,\end{split}\end{equation*} where the constant $c_1>0$ is
independent of $t,t',t_0$ and $i$. So,  by the definition of
$\phi_i$,
$$\phi_i(r)\le \ff{c_1^{1/2} r^{\ff \dd 4} |\si^* e_i|}{\ll_i^{\ff 1 2-\ff \dd 4}},\ \ r\in [0,1].$$ Combining this with \eqref{BB4}, we deduce from
the definition of $\dd_i$ that
$$\dd_i\le \ff{c_2|\si^* e_i|}{\ll_i^{\ff 1 2-\ff\dd 4}},\ \ i\ge 1 $$ holds for some constant $c_2>0$ independent of $t_0$.  This
and \eqref{JJ'} lead to \eqref{TH}. Therefore, according to Theorem
\ref{T2.1} and \eqref{BB2}, we prove  \eqref{a5}  for some constant
$\vv\in (0,1)$.

(b) Next, we prove \eqref{a4} for small $r>0$. By \eqref{eq2}, it
suffices to prove for $\xi\equiv0.$
 We simply denote $X(t)=X^0(t).$
It follows from {\bf (A1)}, {\bf (A2)}, and \eqref{a1}  that
\begin{equation*}
\e^{\ll_1t}|X(t)|
\le \int_0^t\e^{\ll_1s}\{c_0+L\|X_s\|_\8\}\d
s+\e^{\ll_1t}\Big|\int_0^t\e^{(t-s)A}\si\d W(s)\Big|,\ \ t\ge 0
\end{equation*}
holds for some constant $c_0>0.$ This  implies
\begin{equation*}
\begin{split}
\e^{\ll_1t}\|X_t\|_\8&\le\e^{\ll_1
r_0}\sup_{-r_0\le\theta\le0}(\e^{\ll_1(t+\theta)}|X(t+\theta)|)\\
&\le
c_1 \e^{\ll_1t}(1+\|Z_{t}\|_\infty)+L\e^{\ll_1
r_0}\int_0^t\e^{\ll_1s}\|X_s\|_\8\d s
\end{split}
\end{equation*}
for some constant  $c_1>0$, where $ Z_{t}$ is defined  in
\eqref{b2}. So, by  Gronwall's formula,
\begin{equation*}
\begin{split}
\|X_t\|_\8&\le
c_1(1+\|Z_{t}\|_\infty)+c_1L\e^{\ll_1
r_0}\e^{-\ll_1t}\int_0^t\Big\{\e^{\ll_1s}+\e^{\ll_1s}\|Z_{s}\|_\infty\Big\}\e^{L\e^{\ll_1
r_0}(t-s)}\d s\\
&\le
c_2(1+\|Z_{t}\|_\infty)+c_2\int_0^t\|Z_{s}\|_\infty\e^{-\ll(t-s)}\d s
\end{split}
\end{equation*}
holds for some constant $c_2>0,$ where $\ll=\ll_1-L\e^{\ll_1r_0}>0$
as assumed above. Thus, using H\"older's inequality and applying
Jensen's inequality for the probability measure $\nu(\d s):= \ff{\ll
\e^{-\ll(t-s)}}{1-\e^{-\ll t}}\d s$ on $[0,t]$, we obtain
\begin{equation}\label{SM}\beg{split}
\E\e^{r\|X_t\|_\8^2}&\le \e^{c_3} \big(\E \e^{ c_3 r \|Z_t\|_\infty^2}\big)^{\ff 1 2} \bigg(\E \exp\bigg[ c_3 r\bigg(\ff{(1-\e^{-\ll t})}{\ll} \int_0^t \|Z_s\|_\infty\nu(\d s)\bigg)^2\bigg]\bigg)^{\ff 1 2}\\
&\le \e^{c_3} \big(\E \e^{ c_3 r \|Z_t\|_\infty^2}\big)^{\ff 1 2} \bigg(\int_0^t \E \exp\bigg[ \ff{c_3 r}{\ll^2}  \|Z_s\|_\infty^2\bigg]\nu(\d s)\bigg)^{\ff 1 2}\\
&\le \e^{c_3} \sup_{s\ge 0} \E \exp\Big[\ff{c_3r}{1\land \ll^2} \|Z_s\|_\infty^2\Big],\ \ t\ge 0, r>0\end{split} \end{equation}
for some constant $c_3>0.$ Thus, when $r>0$ is small enough, \eqref{a4}    follows from  \eqref{a5}.
  \end{proof}

Now, we are in position check  condition (iii) in theorem
\ref{T1.1}.

 \begin{lem}\label{inva} Assume   {\bf (A1)} and {\bf (A2)}. If $\ll>0$, then  $P_t$ admits a
unique invariant measure $\mu$. Moreover,  $\mu(\e^{\vv\|\cdot\|_\infty^2})<\infty$ for some  $\vv>0$.
\end{lem}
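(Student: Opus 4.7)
The plan is to realize $\mu$ as the limit of the one-point laws $\mathcal L(X_t^0)$ as $t\to\infty$ in the Wasserstein-$1$ distance $W_1$ on the Polish space $(\C,\|\cdot\|_\infty)$. The two inputs are the pathwise contraction \eqref{eq2} and the uniform exponential moment bound provided by Lemma \ref{Fer}.

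\textbf{Cauchy property and existence.} By the Markov property applied at time $s$, the pair $(\tilde X_t^{X_s^0},\tilde X_t^0)$, constructed by driving both mild solutions with the same cylindrical Wiener process on $[s,s+t]$, is a coupling of $\mathcal L(X_{t+s}^0)$ and $\mathcal L(X_t^0)$. Applying \eqref{eq2} pathwise to this coupling and taking expectations yields $W_1(\mathcal L(X_{t+s}^0),\mathcal L(X_t^0))\le \E\|\tilde X_t^{X_s^0}-\tilde X_t^0\|_\infty\le \e^{\ll_1 r_0}\e^{-\ll t}\E\|X_s^0\|_\infty$ for all $s,t\ge 0$. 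Since $\sup_{s\ge 0}\E\|X_s^0\|_\infty<\infty$ by Lemma \ref{Fer}, the right-hand side tends to $0$ as $t\to\infty$ uniformly in $s\ge 0$, so $\{\mathcal L(X_t^0)\}_{t\ge 0}$ is $W_1$-Cauchy. Completeness of the $W_1$-space over the Polish space $\C$ produces a limit $\mu\in\mathcal P(\C)$.

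\textbf{Invariance and exponential integrability.} $W_1$-convergence implies weak convergence, so for every bounded Lipschitz $f$ one has $\mu(P_tf)=\lim_{s\to\infty}\E(P_tf)(X_s^0)=\lim_{s\to\infty}\E f(X_{t+s}^0)=\mu(f)$, which forces $P_t^*\mu=\mu$. For the exponential moment, the Portmanteau theorem applied to the lower-semicontinuous (in fact continuous) functional $\xi\mapsto\e^{\vv\|\xi\|_\infty^2}$, combined with Lemma \ref{Fer}, gives $\mu(\e^{\vv\|\cdot\|_\infty^2})\le\liminf_{s\to\infty}\E\e^{\vv\|X_s^0\|_\infty^2}\le\sup_{s\ge 0}\E\e^{\vv\|X_s^0\|_\infty^2}<\infty$ for the small $\vv>0$ supplied by Lemma \ref{Fer}.

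\textbf{Uniqueness.} Applying \eqref{eq2} once more and integrating against $\mu$, for any bounded Lipschitz $f$ and $\xi\in\C$ one has $|P_tf(\xi)-\mu(f)|\le \mathrm{Lip}(f)\,\e^{\ll_1 r_0}\e^{-\ll t}\bigl(\|\xi\|_\infty+\mu(\|\cdot\|_\infty)\bigr)\to 0$ as $t\to\infty$, where $\mu(\|\cdot\|_\infty)<\infty$ by the preceding step. If $\mu'$ is any invariant probability measure, then $\mu'(f)=\mu'(P_tf)\to\mu(f)$ by dominated convergence (the integrand is uniformly bounded by $2\|f\|_\infty$), whence $\mu'=\mu$. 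The only substantive input is the uniform first-moment bound $\sup_{s\ge 0}\E\|X_s^0\|_\infty<\infty$ used in the Cauchy estimate, and this is precisely the content of Lemma \ref{Fer}; the rest is a routine contraction/Krylov-Bogolyubov argument, so no serious obstacle is anticipated.
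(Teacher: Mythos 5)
Your proposal is correct and takes essentially the same route as the paper: both realize $\mu$ as the Wasserstein limit of the laws of $X_t^0$ as $t\to\infty$, using the pathwise contraction \eqref{eq2} (through a same-noise coupling of the solution restarted at two different times) for the Cauchy estimate, and Lemma \ref{Fer} both for the moment input and for $\mu(\e^{\vv\|\cdot\|_\infty^2})<\infty$. The only differences are cosmetic: you work with the unbounded $W_1$ metric, which is complete on $\scr P_1(\C)$ and applicable here since the laws have finite first moments by Lemma \ref{Fer}, whereas the paper uses the truncated distance $1\wedge\|\cdot\|_\infty$; and your uniqueness step, which handles an arbitrary invariant measure via $P_tf(\xi)\to\mu(f)$ and dominated convergence, is if anything slightly more explicit than the paper's.
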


\begin{proof} The proof is similar to that of \cite[Lemma 2.4]{BWY}.
Let $\mu_t^\xi$ be    the law of $X_t^\xi$. Note that if $\mu_t^\xi$
converges weakly to a probability measure $\mu^\xi$ as $t\to\8,$
then $\mu^\xi$ is an invariant probability measure of $P_t$ (see,
e.g., \cite[Theorem 3.1.1]{DZ}. Let $\scr P(\C)$ be the set of all
probability measures on $\C$. Consider the $L^1$-Wasserstein
distance $W$ induced by $\rr(\xi,\eta):= 1\land
\|\xi-\eta\|_\infty$, i.e.,
$$W(\mu_1,\mu_2):= \inf_{\pi\in \C(\mu_1,\mu_2)} \pi(\rr),\ \ \mu_1,\mu_2\in \scr P(\C), $$ where $ \C(\mu_1,\mu_2)$ is the set of all couplings for $\mu_1$ and $\mu_2.$
 It is well
known that $\scr P(\C)$ is a complete metric space with respect to
the distance $W$ (see, e.g., \cite[Lemma 5.3 and Lemma 5.4]{Chen}),
and the topology induced by $W$ coincides with the weak topology  (see, e.g., \cite[Theorem 5.6]{Chen}). So,
to show existence of an invariant measure, it is sufficient to prove
that $\mu_t^\xi$ is a $W$-Cauchy sequence as $t\to\infty$, i.e.,
\begin{equation}\label{q4}
\lim_{t_1,t_2\to\8} W(\mu_{t_1}^\xi,\mu_{t_2}^\xi)=0.
\end{equation}
For any $t_2>t_1>0$, consider the following SPDEs
\begin{equation*}
\d X(t)=\{AX(t) +b(X_t)\}\d t+\si\d W(t),~t\in[0,t_2],~~X_0=\xi,
\end{equation*}
 and
\begin{equation*}
 \d Y(t)=\{AY(t)+b(Y_t)\}\d t+\si\d
W(t),~t\in[t_2-t_1,t_2],~~Y_{t_2-t_1}=\xi.
\end{equation*}
 Then, the laws
of $X_{t_2}(\xi)$ and $Y_{t_2}(\xi)$ are $\mu_{t_2}^\xi$ and
$\mu_{t_1}^\xi$, respectively. Also,  following an argument leading
to derive \eqref{eq5}, we   obtain
\begin{equation*}
\e^{\ll_1 t}\E\|X_t-Y_t\|^2_\8 \le
c_1\E\|X_{t_2-t_1}-\xi\|^2_\8+L\e^{\ll_1r_0}\int_{t_2-t_1}^t\e^{\ll_1
s}\E\|X_s-Y_s\|^2_\8\d s,~t\in[t_2-t_1,t_2]
\end{equation*}
for some constant $c_1>0$. By   Gronwall's  inequality and
$\ll=\ll_1- L\e^{\ll_1 r_0}>0$ as assumed above, this implies
\begin{equation*}
\E\|X_t-Y_t\|^2_\8 \le c_1\e^{-\ll
(t-t_2+t_1)}\E\|X_{t_2-t_1}-\xi\|^2_\8,~~t\in[t_2-t_1,t_2].
\end{equation*}
Combining this with \eqref{a4} yields
\begin{equation*}
\E\|X_{t_2}-Y_{t_2}\|^2_\8 \le c_2\e^{-\ll t_1}
\end{equation*}
so that
\begin{equation*}
W(\mu_{t_1}^\xi,\mu_{t_2}^\xi)\le \E\|X_{t_2}-Y_{t_2}\|_\8\le
\ss{c_2}\e^{-\ff{\ll t_1}{2}}.
\end{equation*}
Therefore,  \eqref{q4} holds, and, by the completeness of $W$, there
exists $\mu^\xi\in \scr P(\C)$ such that
\begin{equation}\label{w20}
\lim_{t\to\8}W(\mu_t^\xi,\mu^\xi)=0.
\end{equation}

To prove the uniqueness, it suffices to show that $\mu^\xi$ is
independent of $\xi\in\C$. This follows since, by the triangle
inequality, \eqref{eq2} and \eqref{w20},
\begin{equation*}
W(\mu^\xi,\mu^\eta)\le \lim_{t\to\infty}\big\{
W(\mu_t^\xi,\mu^\xi)+W(\mu_t^\eta,\mu^\eta)+W(\mu_t^\xi,\mu_t^\eta)\big\}=0,\ \ \xi,\eta\in \C.
\end{equation*}

  Finally, since  $\mu^0_t\to \mu$ weakly as $t\to\infty$,   by \eqref{a4} we have
 $$\mu(\e^{r\|\cdot\|_\8^2}) =\lim_{N\to\infty} \mu(N\land \e^{r\|\cdot\|_\8^2}) =\lim_{N\to\infty} \lim_{t\to\infty} \E (N\land \e^{r \|X_t^0\|_\8^2})<\infty.$$
 Thus, the proof is finished.
\end{proof}

With the above preparations, we present below a proof of Theorem \ref{T1.2}.

\beg{proof}[Proof of Theorem \ref{T1.2}] According to Theorem
\ref{T1.1}, the first two assertions follow from  \eqref{HH1},
\eqref{eq5} and Lemma \ref{inva}. It remains to prove  the last
assertion. According to \cite[Proposition 2.2]{W10},  the Harnack
inequality \eqref{HH1} implies the log-Harnack inequality
$$P_{t_0} \log f(\xi)\le \log P_{t_0} f(\eta) +\ff {c_0}2 \|\xi-\eta\|_\infty^2,\ \ \ 0<f\in \B_b(\C),~\xi,\eta\in \C.$$
By \cite[Proposition 2.3]{ATW14}, this implies
$$|P_{t_0} f(\xi)-P_{t_0}f(\eta)|^2 \le c_0 \|\xi-\eta\|_\infty^2 \|f\|_\infty^2,\ \ f\in \B_b(\C),~\xi,\eta\in \C.$$
Combining this with the Markov property, we obtain
$$\|\mu_{t_0+t}^\xi- \mu_{t_0+t}^\eta\|_{\mbox{var}}\le 2 \sup_{\|f\|_\infty\le 1} \E|P_{t_0}f(X_t^\xi)- P_{t_0} f(X_t^\eta)|\le 2\ss{c_0}\, \E \|X_t^\xi-X_t^\eta\|_\infty,\ \ t\ge 0.$$
Therefore, the last assertion follows from \eqref{eq2}. \end{proof}

\section{Proof of Theorem \ref{T1.3}}

According to what we have done in the last section for  the proof of Theorem \ref{T1.2}, it suffices to verify the existence and uniqueness of the invariant probability measure, as well as conditions (i)-(iii) in Theorem \ref{T1.1}.
In the present setting we have   to pay more attention on the degenerate part. In particular, the known Harnack inequality (see \cite[Corollary 4.4.4]{W13}) does not meet our requirement
as the exponential term in the upper bound is not integrable with respect to the invariant probability measure. So,
we first  establish the following Harnack inequality which extends the corresponding one in \cite{Wang14} for the case without delay.  The proof is modified from \cite{Wang14} using the coupling by change measures. This method was introduced in \cite{ATW} on manifolds and further developed in \cite{W07} for SPDEs and in \cite{ES} for SDDEs, see
\cite{W13} for a self-contained account on coupling by change of measures and applications.

\begin{lem}\label{Har1}
Assume {\bf (B1)}-{\bf (B4)}. Then, for any $t_0>r_0$, there exists
a constant $c>0$ such that
\begin{equation}\label{w7}
(P_{t_0}f(\bar\xi,\bar\eta))^2 \le \e^{ c(\|\xi-\bar
\xi\|_\8^2+\|\eta-\bar \eta\|_\8^2)} P_{t_0}f^2(\xi,\eta),~~ ~(\xi,\eta),~(\bar\xi,\bar\eta)\in\C,~f\in\B_b(\C).
\end{equation}
\end{lem}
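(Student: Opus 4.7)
The plan is to prove \eqref{w7} by the coupling-by-change-of-measure method, following \cite{Wang14} but adapted to the segment solution. On the same filtered probability space carrying $W$, consider the mild solution $(X,Y)$ of \eqref{E1} from $(\xi,\eta)$ and an auxiliary process $(\bar X,\bar Y)$ starting from $(\bar\xi,\bar\eta)$ driven by the \emph{same} Wiener process $W$ but with an extra drift $u(t)$ added to the $\bar Y$-equation. Writing $u(t)=[b(X_t,Y_t)-b(\bar X_t,\bar Y_t)]+\tilde u(t)$ and setting $V=\bar X-X$, $U=\bar Y-Y$, the differences satisfy the \emph{deterministic} mild equations
\beg{equation*}
V(t)=\e^{tA_1}V(0)+\int_0^t\e^{(t-s)A_1}BU(s)\d s,\qquad U(t)=\e^{tA_2}U(0)+\int_0^t\e^{(t-s)A_2}\tilde u(s)\d s,
\end{equation*}
with $V(0)=\bar\xi(0)-\xi(0)$ and $U(0)=\bar\eta(0)-\eta(0)$. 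The strategy is to choose a deterministic $\tilde u$ on $[0,T]$, $T:=t_0-r_0>0$, which forces $V(T)=U(T)=0$, and then set $\tilde u\equiv 0$ on $[T,t_0]$; linear uniqueness then gives $V\equiv U\equiv 0$ on $[T,t_0]$, so that the two segments coincide in $\C$: $\bar X_{t_0}=X_{t_0}$ and $\bar Y_{t_0}=Y_{t_0}$.

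The controllability step uses {\bf (B4)} in an essential way. I would take $\tilde u$ as a linear combination of a ``$U$-term'' $\e^{(T-s)A_2^*}\phi_1$ and an ``$X$-term'' $B^*\e^{(T-s)A_0^*}\e^{(T-s)A_1^*}\phi_2$; the intertwining identity $B\e^{sA_2}=\e^{sA_1}\e^{sA_0}B$ converts the $V(T)=0$ condition into an equation whose principal part is $Q_T\phi_2$, and invertibility of $Q_T$ produces $(\phi_1,\phi_2)\in\H_2\times\H_1$ depending linearly on $(V(0),U(0))$. A direct computation then gives
\beg{equation*}
\int_0^T|\si^{-1}\tilde u(s)|^2\d s\le C_1\big(|V(0)|^2+|U(0)|^2\big)
\end{equation*}
for some constant $C_1=C_1(t_0,r_0)>0$. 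The feedback part $b(X_t,Y_t)-b(\bar X_t,\bar Y_t)$ is bounded by $K_1\|V_t\|_\8+K_2\|U_t\|_\8$ via {\bf (B2)}, and because $V$ and $U$ are deterministic a Gronwall argument on $\|V_t\|_\8+\|U_t\|_\8$ over $[0,t_0]$ (using the mild formulas above together with the $L^2$-bound on $\tilde u$) yields the \emph{deterministic} estimate
\beg{equation*}
\int_0^{t_0}|\si^{-1}u(s)|^2\d s\le c\big(\|\xi-\bar\xi\|_\8^2+\|\eta-\bar\eta\|_\8^2\big).
\end{equation*}

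Because this bound is deterministic, Novikov's condition is automatic, and Girsanov's theorem applies: $\tilde W(t):=W(t)+\int_0^t\si^{-1}u(s)\d s$ is a cylindrical Brownian motion under $\d\Q:=R\,\d\P$ with $R=\exp(-\int_0^{t_0}\<\si^{-1}u,\d W\>-\ff12\int_0^{t_0}|\si^{-1}u|^2\d s)$, and under $\Q$ the pair $(\bar X,\bar Y)$ solves \eqref{E1} from $(\bar\xi,\bar\eta)$. Hence $P_{t_0}f(\bar\xi,\bar\eta)=\E^\Q f(\bar X_{t_0},\bar Y_{t_0})=\E[Rf(X_{t_0},Y_{t_0})]$, and Cauchy--Schwarz combined with the deterministic bound above (which yields $\E R^2\le\e^{c(\|\xi-\bar\xi\|_\8^2+\|\eta-\bar\eta\|_\8^2)}$) produces \eqref{w7}. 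The main difficulty is the simultaneous controllability step: constructing one control $\tilde u$ that drives both $U$ in $\H_2$ and $V$ in $\H_1$ to zero at time $T$ with an $L^2$-bound depending only on the initial-segment differences. This is precisely what hypothesis {\bf (B4)} is designed to deliver, via the invertibility of $Q_T$ together with the intertwining for $B$; closing the self-referential dependence of $u$ on $(\bar X,\bar Y)$ through the $b$-difference is then a routine Gronwall step based on {\bf (B2)}.
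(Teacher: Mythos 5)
Your proposal is correct and follows essentially the same route as the paper: couple $(\bar X,\bar Y)$ to $(X,Y)$ through the same Wiener process, replace $b(\bar X_t,\bar Y_t)$ by $b(X_t,Y_t)$ plus a deterministic control supported on $[0,t_0-r_0]$ so that the differences are deterministic and the segments coincide at time $t_0$, then apply Girsanov, weak uniqueness and Cauchy--Schwarz, with {\bf (B2)} and {\bf (B4)} giving the deterministic bound on the Girsanov exponent. The only divergence is the simultaneous-controllability step, which you leave as a sketch and which in your two-parameter ansatz leads to a genuinely coupled linear system for $(\phi_1,\phi_2)$; the paper sidesteps this by prescribing the difference directly as $\bar Y(t)-Y(t)=\e^{tA_2}\big\{(t_0-r_0-t)^+(\bar\eta(0)-\eta(0))/(t_0-r_0)+h(t)\big\}$ with $h(t)=t(t_0-r_0-t)^+B^*\e^{-tA_0^*}e$, so that $\bar Y$ merges with $Y$ automatically on $[t_0-r_0,t_0]$ and only the single weighted Gramian $\tilde Q_{t_0-r_0}=\int_0^{t_0-r_0}s(t_0-r_0-s)\e^{sA_0}BB^*\e^{sA_0^*}\d s$ (invertible by {\bf (B4)}) must be inverted to choose $e$ and kill $\bar X-X$ on that interval.
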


\begin{proof} Let $(X(t),Y(t))=(X^{\xi,\eta}(t), Y^{\xi,\eta}(t))$ for $t\ge 0$, and let $(\bar X(t), \bar Y(t))$ solve the following equation for $(\bar X_0,\bar Y_0)=(\bar\xi,\bar \eta)$:
\begin{equation*}
\begin{cases}
\d \bar X(t)= \{A_1 \bar X(t)+B\bar Y(t)\}\d t,\\
\d \bar Y(t)= \Big\{A_2\bar Y(t)+
b(X_t,Y_t)+\ff{1_{[0,t_0-r_0]}(t)}{t_0-r_0}\e^{tA_2}(\eta(0)-\bar\eta(0))+\e^{tA_2}h'(t)\Big\}\d
t +\si \d W(t),
\end{cases}
\end{equation*}where
\begin{equation}\label{w10}
h(t):=t(t_0-r_0-t)^+B^*\e^{-tA_0^*}e,~~~~t\in[0,t_0]
\end{equation}
for $A_0$ in {\bf (B4)} and  some $e\in \H_1$ to be determined.  Obviously,
\begin{equation}\label{w2}
\bar
Y(t)-Y(t)=\e^{tA_2}\Big\{\ff{(\bar\eta(0)-\eta(0))(t_0-r_0-t)^+}{t_0-r_0}
+h(t)\Big\},~~~~t\in[0,t_0].
\end{equation}
In particular, we have
  $\bar Y_{t_0}=Y_{t_0}$. Next,   the equations of $X(t)$ and $\bar X(t)$ yield
\begin{equation}\label{w1}
\bar X(t)-X(t)=\e^{tA_1}(\bar\xi(0)-\xi(0))+\int_0^t\e^{(t-s)A_1 }B(\bar Y(s)-Y(s))\d s.
\end{equation}  Substituting  \eqref{w2} into \eqref{w1}, we find
that
\begin{equation*}
\bar X(t)-X(t) =\e^{tA_1}  (\bar\xi(0)-\xi(0))
 +\int_0^t\e^{(t-s) A_1}B \e^{sA_2}\Big\{\ff{(\bar\eta(0)-\eta(0))(t_0-r_0-s)^+}{t_0-r_0}  +h(s)\Big\}\d s.
\end{equation*}
By virtue of {\bf (B4)} and the definition of   $h$, this implies
\begin{equation}\label{Hui}
\begin{split}
&\bar X(t)-X(t)\\&=\e^{tA_1 }(\bar\xi(0)-\xi(0))
 +\int_0^t\e^{(t-s)A_1}\e^{sA_1}\e^{sA_0}
B\Big\{\ff{(\bar\eta(0)-\eta(0))(t_0-r_0-s)^+}{t_0-r_0}  +h(s)\Big\}\d s\\
&=\e^{tA_1}\bigg(\bar\xi(0)-\xi(0)
 +\int_0^t\e^{sA_0}B \Big\{\ff{(\bar\eta(0)-\eta(0))(t_0-r_0-s)^+}{t_0-r_0}  +h(s)\Big\}\d s\bigg)\\
&=\e^{tA_1}\bigg(\bar\xi(0)-\xi(0)
 +\int_0^{t_0-r_0}\e^{sA_0}B \Big\{\ff{(\bar\eta(0)-\eta(0))(t_0-r_0-s)}{t_0-r_0}  +h(s)\Big\}\d s\bigg)\end{split}
\end{equation} for any $t\in[t_0-r_0,t_0].$
Moreover,   {\bf (B4)} implies that
\begin{equation*}
\tt
Q_{t_0-r_0}:=\int_0^{t_0-r_0}s(t_0-r_0-s)\e^{sA_0}BB^*\e^{sA_0^*}\d
s
\end{equation*}
is invertible on $\H_1$. In \eqref{w10},  in particular,  take
\begin{equation*}
e=-\tt
Q_{t_0-r_0}^{-1}\Big\{\bar\xi(0)-\xi(0)+\int_0^{t_0-r_0}\ff{t_0-r_0-s}{t_0-r_0}\e^{sA_0}B(\bar\eta(0)-\eta(0))\d
s\Big\}.
\end{equation*}
Then, inserting $h(\cdot)$ back into \eqref{Hui} leads to $\bar
X(t)=X(t)$ for arbitrary $t\in[t_0-r_0,t_0],$ i.e.,
   $\bar X_{t_0}=X_{t_0}$.
Therefore, we arrive at  $ (X_{t_0},Y_{t_0})=(\bar X_{t_0},\bar
Y_{t_0}). $

\smallskip

Let
\begin{equation*}
\tt W(t)=W(t)+\int_0^t\phi(s)\d s,~~~t\in[0,t_0],
\end{equation*}
where
\begin{equation*}
\phi(t):=\si^{-1}\bigg(b(X_t,Y_t)-b(\bar X_t,\bar
Y_t)+\ff{1_{[0,t_0-r_0]}(t)}{t_0-r_0}\e^{tA_2}(\eta(0)-\bar\eta(0))+\e^{tA_2}h'(t)\bigg).
\end{equation*}
By \eqref{w2} and \eqref{Hui}, for some constant $C>0$ we have
\begin{equation}\label{q2}
\|X_t-\bar X_t\|_\8^2+\|Y_t-\bar Y_t\|_\8^2\le C(\|\xi-\bar
\xi\|_\8^2+\|\eta-\bar \eta\|_\8^2),\ \ \ t\in [0,t_0].
\end{equation}
Thus,  by the Girsanov theorem (see, e.g.,
\cite[Theorem 10.14]{DZ}), $\{\tt W(s)\}_{t\in[0,T]}$ is a
cylindrical Wiener process under the weighted probability measure
$\d\Q:= R\d\P$ with
 $$R:= \exp\Big(-\int_0^{t_0}\<\phi(s), \d W(s)\> -\ff 1 2 \int_0^{t_0} |\phi(s)|^2  \d s\Big).$$
Now, we  reformulate the equation for $(\bar X(t), \bar Y(t))$ as
\begin{equation*}
\begin{cases}
\d \bar X(t)= \{ A_1 \bar X(t) +B\bar Y(t)\}\d t, \\
\d \bar Y(t)= \Big\{A_2\bar Y(t)+ b(X_t,Y_t)\Big\}\d t +\si \d \tt
W(t),~~~t\in[0,t_0].
\end{cases}
\end{equation*}
  Then,  invoking  the weak uniqueness of the equation and using $(\bar
X_{t_0} ,\bar Y_{t_0})= (X_{t_0},Y_{t_0})$,    we derive that
\begin{equation*}
\begin{split}
(P_{t_0}f(\bar\xi,\bar\eta))^2&=\big\{\E_\Q
f(\bar X_{t_0}, \bar Y_{t_0})\big\}^2=\big\{\E (Rf(X_{t_0}, Y_{t_0} )\big\}^2\\
 &\le (\E R^2) \E f^2(X_{t_0}, Y_{t_0} )= (\E R^2 )   P_{t_0}f^2(\xi,\eta).
\end{split}
\end{equation*}
Combining this with  \eqref{q2} and the definitions of $R$ and $\phi$, we prove \eqref{w7} for some constant $c>0.$ \end{proof}

Next, the following lemma  verifies   condition (ii) in Theorem \ref{T1.1}. As explained in Section 3 that we may and
 do assume $\ll= \ll_1- \ll' \e^{\ll_1r_0}>0$; otherwise in the sequel it suffices to
 replace $\ll_1$ by $\ll_1'\in (0,\ll_1]$ which attains the maximum in the definition of $\ll$.

\begin{lem}\label{long2} Assume {\bf (B1)}-{\bf (B3)} and let $\eqref{c1}$ hold.
  Then  there
exists  $c>0$ such that for $\ll:= \sup_{s\in (0,\ll_1]} (s -
\ll'\e^{sr_0})>0$,
\begin{equation}\label{w6}\beg{split}
&\|X_t^{\xi,\eta}-X_t^{\bar\xi,
\bar\eta}\|_\8+\|Y_t^{\xi,\eta}-Y_t^{\bar\xi, \bar\eta}\|_\8\le
c(\|\xi-\bar\xi\|_\8+\|\eta-\bar\eta\|_\8)\e^{-\ll t} \end{split}
\end{equation}
for any $t\ge 0, (\xi,\eta), (\bar\xi,\bar\eta)\in \C$.

\end{lem}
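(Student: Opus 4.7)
The plan is to adapt the single-equation Gronwall-with-delay argument of Section~3 (the derivation of \eqref{eq2}) to the coupled system \eqref{E1}. Set $U(t):=X^{\xi,\eta}(t)-X^{\bar\xi,\bar\eta}(t)$ and $V(t):=Y^{\xi,\eta}(t)-Y^{\bar\xi,\bar\eta}(t)$. Since both pairs of mild solutions are driven by the same cylindrical Wiener process, the stochastic integrals cancel upon subtraction and we obtain the pathwise Volterra identities
\beg{equation*}\beg{split}
U(t)&=\e^{tA_1}(\xi(0)-\bar\xi(0))+\int_0^t\e^{(t-s)A_1}BV(s)\,\d s,\\
V(t)&=\e^{tA_2}(\eta(0)-\bar\eta(0))+\int_0^t\e^{(t-s)A_2}\bigl[b(X_s^{\xi,\eta},Y_s^{\xi,\eta})-b(X_s^{\bar\xi,\bar\eta},Y_s^{\bar\xi,\bar\eta})\bigr]\d s.
\end{split}\end{equation*}
Assumption {\bf(B3)} gives $\|\e^{tA_1}\|\le\e^{(\dd-\ll_1)t}$, while {\bf(B1)} together with the self-adjointness of $-A_2$ gives $\|\e^{tA_2}\|\le\e^{-\ll_1 t}$; combining these with the Lipschitz bound {\bf(B2)} yields scalar Volterra-type inequalities for $|U(t)|$ and $|V(t)|$.

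Fix $s_1\in(0,\ll_1]$ attaining (or arbitrarily close to) the supremum $\ll=\sup_{s\in(0,\ll_1]}\bigl(s-\ll'\e^{sr_0}\bigr)$, multiply both Volterra inequalities by $\e^{s_1 t}$, and then replay the supremum-over-$\theta\in[-r_0,0]$ step that was used to pass from \eqref{a1} to \eqref{eq5}. This introduces the factor $\e^{s_1 r_0}$ on the right-hand side and converts the left-hand sides to $u(t):=\e^{s_1 t}\|U_t\|_\infty$ and $v(t):=\e^{s_1 t}\|V_t\|_\infty$. The resulting delayed Volterra pair has the schematic form
\beg{equation*}\beg{split}
u(t)&\le\e^{s_1 r_0}\Bigl(\|\xi-\bar\xi\|_\infty+(s_1+\dd-\ll_1)^+\int_0^t u(s)\,\d s+\|B\|\int_0^t v(s)\,\d s\Bigr),\\
v(t)&\le\e^{s_1 r_0}\Bigl(\|\eta-\bar\eta\|_\infty+\int_0^t\bigl(K_1 u(s)+K_2 v(s)\bigr)\d s\Bigr).
\end{split}\end{equation*}

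To decouple the pair, introduce the positive left Perron eigenvector $(\alpha_1,\alpha_2)$ of $M:=\bigl(\begin{smallmatrix}\dd&\|B\|\\K_1&K_2\end{smallmatrix}\bigr)$ associated with its largest eigenvalue $\ll'$, characterised by $\alpha_1\dd+\alpha_2 K_1=\ll'\alpha_1$ and $\alpha_1\|B\|+\alpha_2 K_2=\ll'\alpha_2$. Taking the linear combination $\alpha_1(\cdot)+\alpha_2(\cdot)$ of the two displays above, the coefficient of $u(s)$ in the combined integrand simplifies to $\alpha_1(s_1-\ll_1+\ll')\le\ll'\alpha_1$ (using $s_1\le\ll_1$) and that of $v(s)$ is exactly $\ll'\alpha_2$. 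Consequently $F(t):=\alpha_1 u(t)+\alpha_2 v(t)$ satisfies the scalar Volterra inequality
$$F(t)\le F_0+\ll'\e^{s_1 r_0}\int_0^t F(s)\,\d s,$$
with $F_0$ a constant multiple of $\|\xi-\bar\xi\|_\infty+\|\eta-\bar\eta\|_\infty$. Gronwall's lemma delivers $F(t)\le F_0\e^{\ll'\e^{s_1 r_0}t}$, which rearranges to $\|U_t\|_\infty+\|V_t\|_\infty\le c\bigl(\|\xi-\bar\xi\|_\infty+\|\eta-\bar\eta\|_\infty\bigr)\e^{-(s_1-\ll'\e^{s_1 r_0})t}=c(\cdot)\e^{-\ll t}$.

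The principal delicate point is the coupling step: only the Perron eigenvector of $M$ — rather than any ad hoc weighting — causes the cross-terms in the integrand to recombine into a scalar Gronwall kernel with coefficient $\ll'\e^{s_1 r_0}$, which is the matrix-theoretic growth rate dictated by $M$ and therefore matches the rate $\ll$ claimed in the statement. Everything else is bookkeeping along the lines of Section~3: the Volterra inequalities from the semigroup bounds in {\bf(B1)}, {\bf(B3)} and the Lipschitz bound {\bf(B2)}, the sup-over-$\theta$ trick producing the factor $\e^{s_1 r_0}$, and finally a single application of Gronwall.
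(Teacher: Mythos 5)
Your proposal is correct and takes essentially the same route as the paper: the weight $\aa$ the paper defines in \eqref{c2}, characterised by the identities \eqref{c3}, is exactly your left Perron eigenvector $(\alpha_1,\alpha_2)=(\aa,1)$ of $M=\bigl(\begin{smallmatrix}\dd&\|B\|\\ K_1&K_2\end{smallmatrix}\bigr)$ with eigenvalue $\ll'$. The remaining steps --- the two Volterra inequalities obtained from {\bf (B1)}--{\bf (B3)}, the supremum-over-$\theta$ step producing the factor $\e^{\ll_1 r_0}$, and a single Gronwall application with $\ll=\ll_1-\ll'\e^{\ll_1 r_0}>0$ --- coincide with the paper's argument.
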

\begin{proof}
By {\bf (B1)}-{\bf (B3)}, we have
\begin{equation}\label{WW0} \beg{split}
&\e^{\ll_1t}|X^{\xi,\eta}(t)-X^{\bar\xi,\bar\eta}(t)|-
  |\xi(0)-\bar\xi(0)|\\
  &\le \int_0^t\e^{\ll_1s}\{\dd
|X^{\xi,\eta}(s)-X^{\bar\xi,\bar\eta}(s)|
 +\|B\|\cdot|Y^{\xi,\eta}(s)-Y^{\bar\xi,\bar\eta}(s)|\}\d s,\\
 &\e^{\ll_1t}|Y^{\xi,\eta}(t)-Y^{\bar\xi,\bar\eta}(t)|-
   |\eta(0)-\bar\eta(0)|\\
   &\le \int_0^t\e^{\ll_1s}\{K_1\|X_s^{\xi,\eta}-X_s^{\bar\xi,\bar\eta}\|_\8
 +K_2\|Y_s^{\xi,\eta}-Y_s^{\bar\xi,\bar\eta}\|_\8\}\d s.
\end{split}
\end{equation}
Next, let
\begin{equation}\label{c2}
\aa=\ff{\dd-K_2+\ss{(K_2-\dd)^2+4K_1\|B\|}}{2\|B\|}.
\end{equation}
It is easy to see that $\aa>0$ and, for $\ll'>0$ defined in
\eqref{c1}, we have
\begin{equation}\label{c3}
\aa\dd+K_1=\ll'\aa,\ \ \ \ \aa\|B\|+K_2=\ll'.
\end{equation}
Combining \eqref{WW0}, \eqref{c2}   with \eqref{c3}, we derive
\begin{equation*}
\begin{split}
\e^{\ll_1t}&(\aa\|X_t^{\xi,\eta}-X_t^{\bar\xi,\bar\eta}\|_\8+\|Y_t^{\xi,\eta}-Y_t^{\bar\xi,\bar\eta}\|_\8)\\
&\le\e^{\ll_1r_0}\Big\{\aa\|\xi-\bar\xi\|_\8+\|\eta-\bar\eta\|_\8\\
&\quad+\int_0^t\e^{\ll_1s}((\dd\aa+K_1)\|X_s^{\xi,\eta}-X_s^{\bar\xi,\bar\eta}\|_\8+(\aa\|B\|+K_2)\|Y_s^{\xi,\eta}-Y_s^{\bar\xi,\bar\eta}\|_\8)\d s\Big\}\\
&\le\e^{\ll_1r_0}\Big\{\aa\|\xi-\bar\xi\|_\8+\|\eta-\bar\eta\|_\8\\
&\quad+\ll'\int_0^t\e^{\ll_1s}(\aa\|X_s^{\xi,\eta}-X_s^{\bar\xi,\bar\eta}\|_\8+\|Y_s^{\xi,\eta}-Y_s^{\bar\xi,\bar\eta}\|_\8)\d
s\Big\}.
\end{split}
\end{equation*}
Therefore, we complete the proof by using   Gronwall's inequality
and $\ll= \ll_1- \ll' \e^{\ll_1 r_0}>0$ as assumed above.
\end{proof}

Moreover, corresponding to Lemma \ref{Fer}  for the non-degenerate
case, we have the following result on the exponential integrability
of the segment solution.

\begin{lem}\label{exp1} Assume {\bf (B1)}-{\bf (B3)} and let $\eqref{c1}$ hold. Then there exists a constant $\vv>0$ such that
\begin{equation*}
\sup_{t\ge0}\E\e^{\vv(\|X_t^{\xi,\eta}\|_\8^2+\|Y_t^{\xi,\eta}\|_\8^2)}<\8,\ \ \ (\xi,\eta)\in \C.
\end{equation*}
\end{lem}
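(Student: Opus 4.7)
}
The plan is to follow the same two-step pattern used in Lemma \ref{Fer}: first control the stochastic convolution via the infinite-dimensional Fernique inequality (Theorem \ref{T2.1}), then bootstrap through Gronwall and a Jensen-type estimate in the spirit of \eqref{SM}. The only new wrinkle is that the noise enters the $Y$-equation but must ultimately control both $X$ and $Y$, so I will combine the two components with the weight $\aa$ coming from Lemma \ref{long2}.

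First, by Lemma \ref{long2},
$$\|X_t^{\xi,\eta}\|_\infty+\|Y_t^{\xi,\eta}\|_\infty \le \|X_t^{0,0}\|_\infty+\|Y_t^{0,0}\|_\infty + c(\|\xi\|_\infty+\|\eta\|_\infty)\e^{-\ll t},$$
so it is enough to treat $(\xi,\eta)=(0,0)$. Write
$$Z_t(\theta):=\int_0^{(t+\theta)^+}\e^{(t+\theta-s)A_2}\si\d W(s),\ \ \theta\in[-r_0,0].$$
Since assumption {\bf (B1)} has exactly the structure of {\bf (A1)} for $A_2$, the argument in part (a) of the proof of Lemma \ref{Fer} applies verbatim (using Theorem \ref{T2.1}) to give a constant $\vv_0>0$ with
$$\sup_{t\ge 0}\E\e^{\vv_0\|Z_t\|_\infty^2}<\infty.$$

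Next, from the mild formulation of \eqref{E1} together with {\bf (B2)} and {\bf (B3)} (the latter giving $\|\e^{tA_1}\|\le \e^{(\dd-\ll_1)t}$), and using $|b(X_s,Y_s)|\le |b(0,0)|+K_1\|X_s\|_\infty+K_2\|Y_s\|_\infty$, I obtain
\begin{equation*}
\begin{split}
\e^{\ll_1 t}|X(t)| &\le \int_0^t \e^{\ll_1 s}\bigl(\dd|X(s)|+\|B\|\,|Y(s)|\bigr)\d s,\\
\e^{\ll_1 t}|Y(t)| &\le \int_0^t \e^{\ll_1 s}\bigl(c_0+K_1\|X_s\|_\infty+K_2\|Y_s\|_\infty\bigr)\d s + \e^{\ll_1 t}|Z_t(0)|.
\end{split}
\end{equation*}
Multiplying the $X$-inequality by $\aa$ from \eqref{c2}, adding to the $Y$-inequality, taking the sup over a segment (picking up the factor $\e^{\ll_1 r_0}$ as in \eqref{eq5}), and invoking the identities \eqref{c3} yields
$$\e^{\ll_1 t}\bigl(\aa\|X_t\|_\infty+\|Y_t\|_\infty\bigr)\le \e^{\ll_1 r_0}\Bigl\{c_1\e^{\ll_1 t}(1+\|Z_t\|_\infty) + \ll'\!\int_0^t \e^{\ll_1 s}\bigl(\aa\|X_s\|_\infty+\|Y_s\|_\infty\bigr)\d s\Bigr\}$$
for some constant $c_1>0$. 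Gronwall's inequality together with $\ll=\ll_1-\ll'\e^{\ll_1 r_0}>0$ then produces
$$\aa\|X_t\|_\infty+\|Y_t\|_\infty\le c_2(1+\|Z_t\|_\infty)+c_2\int_0^t\e^{-\ll(t-s)}\|Z_s\|_\infty\d s,\ \ t\ge 0,$$
for some $c_2>0$, which is exactly the analogue of the inequality preceding \eqref{SM} in the non-degenerate case.

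Finally, squaring, applying H\"older's inequality, and using Jensen's inequality for the exponential against the probability measure $\nu(\d s):=\ll\e^{-\ll(t-s)}(1-\e^{-\ll t})^{-1}\d s$ on $[0,t]$, exactly as in \eqref{SM}, gives
$$\E\e^{\vv(\|X_t\|_\infty^2+\|Y_t\|_\infty^2)} \le c_3\sup_{s\ge 0}\E\exp\!\Bigl[\tfrac{c_3\vv}{1\wedge\ll^2}\|Z_s\|_\infty^2\Bigr]$$
for all $\vv>0$ small and some $c_3>0$. Choosing $\vv$ small enough that $\tfrac{c_3\vv}{1\wedge\ll^2}\le \vv_0$, the first step closes the bound. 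The main obstacle is the combined Gronwall step: balancing the $\aa$-weight so that \eqref{c3} converts the pair of vectorial inequalities into a single scalar Gronwall inequality driven by $\|Z_t\|_\infty$; after that, everything is parallel to Lemma \ref{Fer}(b).
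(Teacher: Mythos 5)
Your proposal is correct and follows essentially the same route as the paper's proof: reduction to $(\xi,\eta)=(0,0)$ via Lemma \ref{long2}, the Fernique-type bound \eqref{a5} for the stochastic convolution with $(\H_2,A_2)$ in place of $(\H,A)$, the $\aa$-weighted combination of the $X$- and $Y$-inequalities using \eqref{c2}--\eqref{c3}, Gronwall, and the H\"older--Jensen step as in \eqref{SM}.
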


\begin{proof} By Lemma \ref{long2}, it suffices to prove for $(\xi,\eta)\equiv(0,0).$ Simply denote $(X_t,Y_t)=(X_t^{0,0}, Y_t^{0,0}).$ We have
\begin{equation*}
X(t)= \int_0^t\e^{(A_1-\dd)(t-s)}\big(\dd X(s)+BY(s)\big)\d s,\ \ t\ge 0.
\end{equation*}
Then,  {\bf (B3)} yields
\begin{equation}\label{H1}
 \e^{\ll_1t}|X(t)|
 \le \int_0^t\e^{\ll_1s}\{\|B\|\cdot|Y(s)| +\dd |X(s)| \}\d
 s.
\end{equation}
Next, according to {\bf (B1)} and {\bf (B2)}, it follows that
\begin{equation}\label{H2}
 \e^{\ll_1t}|Y(t)|
\le \int_0^t\e^{\ll_1s}\{c_0+K_1\|X_s\|_\8+K_2\|Y_s\|_\8\}\d
 s+\e^{\ll_1t}\Big|\int_0^t\e^{A_2 (t-s)}\si\d W(s)\Big|
\end{equation}
holds for  $c_0:=|b(0,0)|.$ Obviously, using $(\H_2, A_2)$ to replace $(\H,A)$, we see that \eqref{a5} holds for
$$Z_t(\theta):= \int_0^{(t+\theta)^+} \e^{A_2 (t-s)}\si\d W(s),\ \ \theta\in [-r_0,0].$$
Combining  \eqref{c3}, \eqref{H1} with  \eqref{H2}, for the present
$Z_t$ we have
\begin{equation*}
\begin{split}
&\e^{\ll_1t}(\aa\|X(t)\|_\8+\|Y(t)\|_\8)\\
&\le\e^{\ll_1r_0}\Big(\aa\sup_{-r_0\le\theta\le0}(\e^{\ll_1(t+\theta)}|X(t+\theta)|)+\sup_{-r_0\le\theta\le0}(\e^{\ll_1(t+\theta)}|Y(t+\theta)|)
\Big)\\
&\le\e^{\ll_1r_0}\Big(\int_0^t\e^{\ll_1s}\{c_0+(\aa\dd+K_1)
\|X_s\|_\8+(\aa\|B\|+K_2)\|Y_s\|_\8\}\d s
 +\e^{\ll_1t}\|Z_{t}\|_\8
\Big)\\
&\le
c_1\e^{\ll_1 t} \big(1+\|Z_{t}\|_\8\big)+\ll'\e^{\ll_1r_0}\int_0^t\e^{\ll_1s}(\aa
\|X_s\|_\8+\|Y_s\|_\8)\d
 s
\end{split}
\end{equation*}
for some constant $c_1>0.$  By Gronwall's inequality and $\ll=
\ll_1- \ll' \e^{\ll_1 r_0}>0$ as assumed above, this yields
\begin{equation*}
\begin{split}
\aa\|X_t\|_\8+\|Y_t\|_\8&\le
c_1\big(1+\|Z_{t} \|_\8 \big)
 +c_1\ll'\e^{\ll_1r_0}\int_0^t\big(1+\|Z_{s}\|_\8\big)
\e^{-\ll''(t-s)}\d s\\
&\le
c_2\bigg( 1+\|Z_{t} \|_\8  +\int_0^t\|Z_{s}\|_\8
\e^{-\ll''(t-s)}\d s\bigg)
\end{split}
\end{equation*}
for some constant $c_2>0$. Hence,   by using H\"older's and Jensen's  inequalities as in \eqref{SM} and applying \eqref{a5} for the present $Z_t$, we
finish the proof. \end{proof}

Finally,  the following lemma ensures the existence and uniqueness of invariant probability measure and verifies condition (iii) in Theorem \ref{T1.1}, so that the proof of Theorem \ref{T1.3} is finished.

\begin{lem}\label{inva'}
Assume {\bf (B1)}- {\bf (B3)} and $\eqref{c1}$. Then $P_t$ has a
unique invariant measure $\mu$. Moreover, $\mu(\e^{\vv \|\cdot\|_\infty^2})<\infty $ holds for some constant $\vv>0.$  \end{lem}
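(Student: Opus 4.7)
The plan is to mimic the proof of Lemma \ref{inva}, replacing the inputs \eqref{eq2} and Lemma \ref{Fer} by their degenerate counterparts, Lemma \ref{long2} and Lemma \ref{exp1}.

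First, I would equip $\scr P(\C)$ with the $L^1$-Wasserstein distance $W$ induced by the truncated metric
$$\rr((\xi,\eta),(\bar\xi,\bar\eta)):=1\wedge\big(\|\xi-\bar\xi\|_\8+\|\eta-\bar\eta\|_\8\big).$$
By \cite[Lemma 5.3, Lemma 5.4, Theorem 5.6]{Chen}, $(\scr P(\C),W)$ is a complete metric space whose topology coincides with weak convergence, so any $W$-limit of $\mu_t^{\xi,\eta}$ is automatically $P_t$-invariant by \cite[Theorem 3.1.1]{DZ}.

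Second, I would show that the net $\{\mu_t^{\xi,\eta}\}_{t\ge 0}$ is $W$-Cauchy. For $t_2>t_1>0$, couple by running two copies of \eqref{E1} driven by the same Wiener process $W(\cdot)$: one with initial data $(\xi,\eta)$ at time $0$, producing $(X(t),Y(t))=(X^{\xi,\eta}(t),Y^{\xi,\eta}(t))$, and another with initial data $(\xi,\eta)$ imposed at time $t_2-t_1$, producing $(\tt X(t),\tt Y(t))$ for $t\ge t_2-t_1$. The laws of $(X_{t_2},Y_{t_2})$ and $(\tt X_{t_2},\tt Y_{t_2})$ are $\mu_{t_2}^{\xi,\eta}$ and $\mu_{t_1}^{\xi,\eta}$ respectively. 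Repeating the Gronwall computation of Lemma \ref{long2} on the shifted interval $[t_2-t_1,t_2]$ with \emph{random} initial gap $(X_{t_2-t_1}-\xi,Y_{t_2-t_1}-\eta)$, I would obtain
$$\E\big(\|X_{t_2}-\tt X_{t_2}\|_\8+\|Y_{t_2}-\tt Y_{t_2}\|_\8\big)\le c\,\e^{-\ll t_1}\,\E\big(\|X_{t_2-t_1}-\xi\|_\8+\|Y_{t_2-t_1}-\eta\|_\8\big).$$
Lemma \ref{exp1} bounds the right-hand expectation uniformly in $t_2-t_1\ge 0$, which yields $W(\mu_{t_1}^{\xi,\eta},\mu_{t_2}^{\xi,\eta})\le C\e^{-\ll t_1}\to 0$ as $t_1\to\8$.

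Third, by completeness of $W$ there exists $\mu^{\xi,\eta}\in\scr P(\C)$ with $\mu_t^{\xi,\eta}\to\mu^{\xi,\eta}$ in $W$; this limit is $P_t$-invariant as noted above. Independence from $(\xi,\eta)$, and hence uniqueness, follows directly from Lemma \ref{long2} and the triangle inequality:
$$W(\mu^{\xi,\eta},\mu^{\bar\xi,\bar\eta})\le\lim_{t\to\8}\big\{W(\mu_t^{\xi,\eta},\mu^{\xi,\eta})+W(\mu_t^{\bar\xi,\bar\eta},\mu^{\bar\xi,\bar\eta})+W(\mu_t^{\xi,\eta},\mu_t^{\bar\xi,\bar\eta})\big\}=0.$$
Writing $\mu$ for this common limit and applying Lemma \ref{exp1} with $(\xi,\eta)=(0,0)$, together with Fatou's lemma on the bounded continuous test function $N\wedge\e^{\vv\|\cdot\|_\8^2}$ followed by monotone convergence in $N$, yields $\mu(\e^{\vv\|\cdot\|_\8^2})<\8$ for the $\vv>0$ provided by Lemma \ref{exp1}.

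I do not anticipate a serious obstacle: the two substantive inputs (exponential contraction and uniform exponential moments) are already in place. The only care required is bookkeeping the combined norm $\|\cdot\|_\8+\|\cdot\|_\8$ on $\H_1\oplus\H_2$ through the Wasserstein argument, which is natural since this is exactly the metric for which Lemma \ref{long2} delivers the contraction.
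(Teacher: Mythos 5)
Your proposal matches the paper's own proof essentially step for step: the same truncated Wasserstein metric, the same time-shifted coupling $(\tt X_t,\tt Y_t)$ started from $(\xi,\eta)$ at time $t_2-t_1$, the same application of the Gronwall contraction from Lemma \ref{long2} with the random initial gap controlled by Lemma \ref{exp1}, and the same triangle-inequality and weak-convergence arguments borrowed from Lemma \ref{inva}. The proof is correct as written.
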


\begin{proof} Let $\mu_t^{\xi,\eta}$ be  the distribution of  $(X_t^{\xi,\eta}, Y_t^{\xi,\eta})$ and let
$$\rr((\xi,\eta), (\bar\xi,\bar\eta))= 1\land (\|\xi-\bar\xi\|_\infty+ \|\eta-\bar\eta\|_\infty).$$
 Making using of Lemmas \ref{long2} and \ref{exp1}  and carrying out an   argument   of Lemma \ref{inva},   we
only need to prove that $\{\mu_t^{\xi,\eta}\}_{t\ge 0}$ is $W$-Cauchy as $t\to\infty$.

For any $t_2>t_1>0$, let $(\tt X(t),\tt Y(t))$ solve    equation \eqref{E1} for $t\in [t_2-t_1,t_2]$ with $(\tt X_{t_2-t_1},\tt Y_{t_2-t_1})= (\xi,\eta).$
 Then,  the laws
of   $(\tt X_{t_2}, \tt Y_{t_2})$ is $\mu_{t_1}^{\xi,\eta}$. So,
\beq\label{CV} W(\mu_{t_1}^{\xi,\eta}, \mu_{t_2}^{\xi,\eta}) \le \E
\big(\|X_{t_2}^{\xi,\eta}-\tt X_{t_2}\|_\infty + \|
Y_{t_2}^{\xi,\eta}-\tt Y_{t_2}\|_\infty\big). \end{equation} Next,
repeating the proof of  Lemma \ref{long2}  for  $t\in [t_2-t_1,t_2]$
and  $(\tt X_t, \tt Y_t)$ in place of $(X_t^{\bar \xi,\bar\eta},
Y_t^{\bar\xi,\bar\eta})$, we obtain
$$\|X_{t_2}^{\xi,\eta}-\tt X_{t_2}\|_\infty + \| Y_{t_2}^{\xi,\eta}-\tt Y_{t_2}\|_\infty \le c(\|\xi-X_{t_2-t_1}^{\xi,\eta}\|_\8+
\|\eta-Y_{t_2-t_1}^{\xi,\eta}\|_\8)\e^{-\ll t_1}$$
 for some constant $c>0$ independent of $t_1$ and $t_2.$ Combining this with \eqref{CV} and using Lemma \ref{exp1}, we prove
 $\lim_{t_1,t_2\to \infty} W(\mu_{t_1}^{\xi,\eta}, \mu_{t_2}^{\xi,\eta}) =0.$ \end{proof}

\noindent{\bf Acknowledgment }

\smallskip

We are indebted to the referee for his/her valuable comments which
have greatly improved our earlier version.

\end{document}